\newcommand{\examend}{\hfill \mbox{$\diamondsuit$}}
\newcommand{\cx}{\mathbb{C}}
\newcommand{\NS}{N_0}
\newcommand{\CC}[1]{M_\bullet(#1)}
\newcommand{\Ref}[1]{(\ref{#1})}
\newcommand{\real}{\mathbb{R}}
\newcommand{\intg}{\mathbb{Z}}
\newcommand{\XX}{\mathbf{X}}
\newcommand{\xx}{{\mathbf{x}}}
\newcommand{\zz}{{\mathbf{z}}}
\newcommand{\pint}{\mathbb{N}}
\newcommand{\arr}{\rightarrow}
\newcommand{\dd}{\,|\,}
\newcommand{\ECS}{{\boldsymbol{\emptyset}}}
\newcommand{\ZO}{\mbox{$0$\,-$1$ }}
\newtheorem{lemma}{Lemma}[section]
\newtheorem{prop}[lemma]{Proposition}
\newtheorem{thm}[lemma]{Theorem}
\newtheorem{cor}[lemma]{Corollary}
\theoremstyle{definition}
\newtheorem{Def}[lemma]{Definition}
\newtheorem{exam}[lemma]{Example}
\newtheorem{exams}[lemma]{Examples}
\theoremstyle{remark}
\newtheorem{rem}[lemma]{Remark}
\newtheorem{rems}[lemma]{Remarks}
\newcommand{\sm}{\smallsetminus}
\newcommand{\defo}{=}
\newcommand{\defoo}{\stackrel{\mathrm{def}}{=}}
\newcommand{\mc}[1]{\mathcal{#1}}
\newcommand{\A}{\mc{A}}
\newcommand{\Ee}{\mc{E}}
\newcommand{\Mb}{\mathbf{M}}
\newcommand{\is}[1]{\mathbf{#1}}
\newcommand{\bu}[1]{{#1}^{\bullet}}
\newcommand{\iz}[1]{\bu{\is{#1}}}
\newcommand{\Nn}{\mathcal{N}}
\newcommand{\rset}[2]{\left\lbrace #1 \dd #2\right\rbrace}
\newcommand{\set}[2]{\rset{#1}{#2}}
\newcommand{\sset}[1]{\left\lbrace #1\right\rbrace}
\newcommand{\R}{\mathbb{R}}
\newcommand{\Net}{\mathfrak{N}}
\newcommand{\F}{\mathcal{F}}
\newcommand{\f}{\mathbf{f}}
\newcommand{\tb}{\boldsymbol{\tau}}
\newcommand{\ah}{\widehat{\alpha}}
\newcommand{\bh}{\widehat{\beta}}
\newcommand{\ol}[1]{\overline{#1}}
\newcommand{\CSz}[1]{#1^0}
\newcommand{\CSf}[1]{#1^\flat}
\renewcommand{\aa}{\alpha}
\newcommand{\af}{\CSf{\aa}}
\newcommand{\az}{\CSz{\aa}}
\newcommand{\bphi}{\boldsymbol{\phi}}
\newcommand{\bpsi}{\boldsymbol{\psi}}
\title[Asynchronous Networks]{Asynchronous Networks and Event Driven Dynamics} 
\dedicatory{Dedicated to the memory of David Broomhead}
\author{Christian Bick}
\thanks{Research supported in part by NSF Grant DMS-1265253 \& Marie Curie IEF Fellowship (Project 626111)}
\address{Department of Mathematics, Rice University MS-136, 6100 Main St., Houston, 
TX~77005, USA}
\curraddr{Department of Mathematics, University of Exeter, Exeter EX4 4QF, UK}
\email{c.bick@exeter.ac.uk}
\author{Michael Field}
\thanks{Research supported in part by NSF Grant DMS-1265253 \& Marie Curie IIF Fellowship (Project 627590)}
\address{Department of Mathematics, Rice University MS-136, 6100 Main St., Houston, 
TX~77005, USA}
\curraddr{Department of Mathematics, Imperial College, SW7 2AZ, UK}
\email{mikefield@gmail.com, Michael.Field@imperial.ac.uk}
\date{\today}
\begin{document}

\begin{abstract}
Real-world networks in technology, engineering and biology 
often exhibit dynamics that cannot be adequately reproduced using network 
models given by smooth dynamical systems and a fixed network topology. 
Asynchronous networks give a theoretical and conceptual framework for 
the study of network dynamics where nodes can
evolve independently of one another, be constrained, stop, and later 
restart, and where the interaction between different components of the 
network may depend on time, state, and stochastic effects. This 
framework is sufficiently general to encompass a wide range of 
applications ranging from engineering to neuroscience. Typically, dynamics is piecewise smooth
and there are relationships with Filippov systems. 
In the first part of the paper, we give examples of asynchronous networks,
and describe the basic formalism and structure. In the second part, 
we make the notion of a functional asynchronous network
rigorous, discuss the phenomenon of dynamical locks, and present a 
foundational result on the spatiotemporal factorization of the dynamics
for a large class of functional asynchronous networks. 
\end{abstract}

\newcommand{\imagescaling}{0.9}

\maketitle

\let\oldtocsection=\tocsection
\let\oldtocsubsection=\tocsubsection
\let\oldtocsubsubsection=\tocsubsubsection

\renewcommand{\tocsection}[2]{\hspace{-\normalparindent}\oldtocsection{#1}{#2}}
\renewcommand{\tocsubsection}[2]{\hspace{0em}\oldtocsubsection{#1}{#2}}
\renewcommand{\tocsubsubsection}[2]{\hspace{1em}\oldtocsubsubsection{#1}{#2}}

\tableofcontents


\section{Introduction}
In this work we develop a theory of asynchronous networks and event driven dynamics. This theory constitutes an approach to network dynamics 
that takes account of features encountered in networks from modern technology, engineering, and biology, especially neuroscience. 
For these networks dynamics can involve a mix of distributed and decentralized control,
adaptivity, event driven dynamics, switching, varying network topology and hybrid dynamics (continuous and discrete).  
The associated network dynamics will generally only be piecewise smooth, nodes may stop and later restart 
and there may be no intrinsic global time (we give specific examples and definitions later). 
Significantly, many of these networks have a \emph{function}. For example, transportation 
networks bring people and goods from one point to another and neural networks may perform pattern recognition or
computation.

Given the success of network models based on smooth differential equations and methods based on statistical physics, 
thermodynamic formalism and averaging (which typically lead to smooth network dynamics),
it is not unreasonable to ask whether it is \emph{necessary} to incorporate issues such as nonsmoothness in a theory of network dynamics. 
While nonsmooth dynamics is more familiar in engineering than in physics, 
we argue below that ideas from engineering, control and nonsmooth dynamics apply to many classes of network
and that nonsmoothness often cannot be ignored in the analysis of network function.
As part of these introductory comments, we also explain the motivation underlying our work, and
describe one of our main results: the modularization of dynamics theorem.

\subsection*{Temporal averaging}
Consider the analysis of a network where links are added and removed over time. Two extreme cases have been widely considered in the literature. 
If the network topology switches rapidly, relative to the time scale of the phenomenon being considered, 
then we may be able to replace the varying topology by the time-averaged topology\footnote{For example, if the input structure is additive -- see section~\ref{generalities}.}. Providing that the network topology is not
state dependent, the resulting dynamics will typically be smooth. On the other hand, if the topology changes 
slowly enough relative to the time scale of interest, we may regard the topology as constant and again we obtain smooth
network dynamics. Either one of these approaches may be applicable in a system where time scales are clearly separated.

However, in many situations, especially those involving control or close to bifurcation, 
\emph{changes in network topology may play a crucial role in network function} 
and an averaging approach may fail or neglect essential structure. This is well-known for problems in optimized control where 
solutions are typically nonsmooth and averaging gives the \emph{wrong} solutions (for example, in switching problems using thermostats). 
For an example with variable network topology, we cite the effects of changing connection 
structure (transmission line breakdown), or adding/subtracting a microgrid, on a power grid. 
Neither averaging nor the assumption of constant network structure are appropriate tools: we cannot average the problems away. 
Instead, we are forced to engage with an intermediate regime, where nonsmoothness (switching) and control play a crucial role in network function.

\subsection*{Spatial averaging and network evolution}

Much current research on networks is related to the description and understanding of complex systems~\cite{BA,CH,LLK,RMc}.
Roughly speaking, and avoiding a formal definition~\cite{LLK}, we regard a complex system as a large network of nonlinearly interacting dynamical systems where there
are feedback loops, multiple time and/or spatial scales, emergent behaviour, etc.
One established approach to complex networks and systems uses ideas from statistical mechanics and thermodynamic formalism. 
For example, models of complex networks of interconnected neurons can sometimes be described in terms of their information 
processing capability and entropy~\cite{RNL}.
These methods originate from applications to large interacting systems of particles in physics.
As Schr\"odinger points out in his
1943 Trinity College, Dublin, lectures~\cite{Sc}

\vspace*{0.06in}
\centerline{``...the laws of physics and chemistry are statistical throughout.''}
\vspace*{0.06in}

In contrast to the laws of physics and chemistry, evolution plays a decisive role in the development 
of complex biological structure. Functional biological structures that provided the basis for evolutionary development can be quite small -- the nematode worm \emph{caenorhabditis elegans} has 302 neurons. If the underlying small-scale structure still has functional relevance, an approach based on statistical averages to complex biological networks has to be limited; on the one hand, averaging over the entire network will likely ignore any small scale structure, and on the other hand statistical averages have little or no meaning for small systems -- \emph{at least on a short time scale}.

Reverse engineering large biological structures
appears completely impractical; in part this is because of the role that evolution plays in the development of complex structure. \emph{Evolution works
towards optimization of function, rather than simplicity}, and is often local in character with the flavour of decentralized control.
Similar issues arise in understanding evolved engineering structures. For example,
the internal combustion engine of a car in 1950 was a simple device, whose operation was synchronized through mechanical means.
A modern internal combustion engine is structurally complex and employs a mix of
synchronous and asynchronous systems controlled by multiple computer processors, sensors and complex computer code.

\subsection*{Reductionism}
In nonlinear network dynamics, and complex systems generally, there is the
question as to how far one can make use of reductionist techniques~\cite{Anderson1972}, \cite[2.5]{LLK}.  
One approach, advanced by Alon and Kastan~\cite{KU} in biology, 
has been the identification and description of relatively simple and small dynamical units, 
such as non-linear oscillators or network motifs (small network configurations that occur 
frequently in large biological networks~\cite[Chapter 19]{CH}). Their premise is that a
modular, or engineering, approach to network dynamics is feasible: identify building blocks,  
connect together to form networks and then describe
dynamical properties of the resulting network in terms of the dynamics of its components.
\begin{quote}
``Ideally, we would like to understand the dynamics of the entire network based on the dynamics of the individual building blocks." Alon~\cite[page 27]{Alon}.
\end{quote}
While such a reductionist approach works well in linear systems theory, where a superposition principle holds, or
in the study of synchronization in weakly coupled systems of nonlinear approximately identical oscillators~\cite{PC,PN,BBH,GST}, it is
usually unrealistic in the study of heterogenous networks modelled by a system of analytic nonlinear differential
equations: network dynamics may bear little or no relationship to the intrinsic (uncoupled) dynamics of nodes.

\subsection*{A theory of asynchronous networks.}

The theory of asynchronous networks we develop provides an approach to 
the analysis of dynamics and function in complex networks. 
We illustrate the setting for our main result with a simple example. Figure~\ref{schem1} shows the schematics of a network where there is only intermittent 
connection between nodes\footnote{Figure~\ref{schem1} can be viewed as representing part of a threaded computer program.
The events~$\is{E}^a,\dotsc,\is{E}^h$ will represent synchronization events -- evolution of associated threads is stopped until each thread has finished its computation and then variables are
synchronized across the threads.}.
We assume eight nodes $N_1, \dotsc, N_8$. Each node~$N_i$ will be given an initial state 
and started at time $T_i \ge 0$. Crucially, we assume the network has a function: reaching designated  
terminal states in finite time -- indicated on the right hand side of the figure.
Nodes interact depending on their state. For example, referring to figure~\ref{schem1}, nodes $N_1$, $N_2$ first interact
during the event indicated by~$\is{E}^a$.
Observe there is no
global time defined for this system but there is a partially ordered temporal structure: event~$\is{E}^c$ always occurs after
event~$\is{E}^a$ but may occur before or after event~$\is{E}^b$. We caution that while the direction of time is from left-to-right, 
there is no requirement of moving from left to right in the spatial variables: the phase space dimension for nodes could be 
greater than one and the initialization and terminations sets could be the same. This example can be generalized to allow for 
changes in the number and type of nodes after each event.  
The intermittent connection structure we use may be viewed as an extension of the 
idea of \emph{conditional action} as defined by Holland in the context of complex adaptive systems~\cite{Holl}.

\begin{figure}[h]
\centering
\includegraphics[width=0.8 \textwidth]{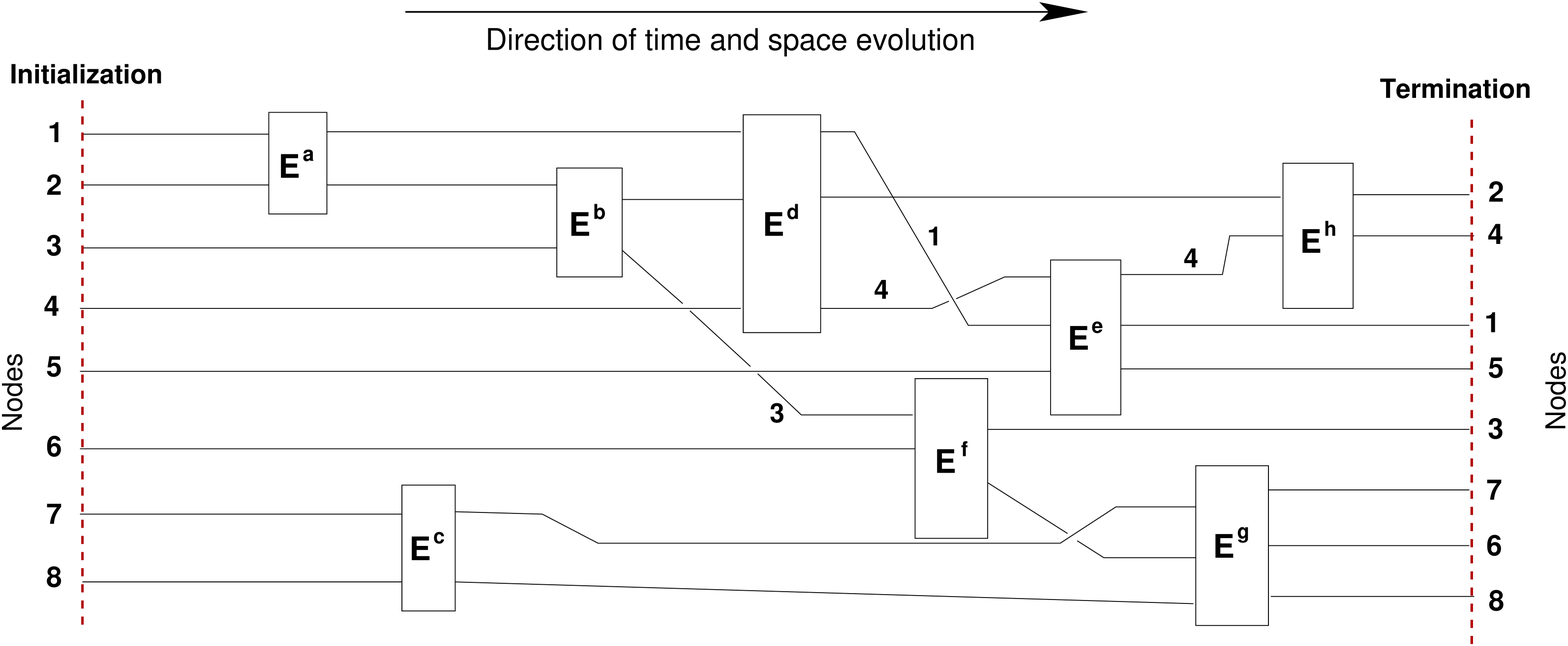}
 
\caption{A functional feedforward network with 8 nodes}
\label{schem1}
\end{figure}

Our main result, stated and proved in part II of this work~\cite{BF1}, is a \emph{modularization of dynamics theorem} that yields a functional decomposition for a
large class of asynchronous networks. 
Specifically, we give general conditions that enable us to represent a large class of 
functional asynchronous networks as feedforward functional networks of the type illustrated in figure~\ref{schem1}. As a consequence, the
function of the original network can be expressed explicitly in terms of uncoupled node dynamics and event function. 
Nonsmooth effects, such as changes in network topology through decoupling of nodes and stopping and restarting of nodes, are one of the crucial ingredients 
needed for this result. In networks modelled by smooth dynamical systems,
all nodes are effectively coupled to each other at all times and information propagates instantly across the entire network. Thus, a spatiotemporal decomposition 
is only possible if the network dynamics is nonsmooth and (subsets of) nodes are allowed to evolve independently of each other for periods of time. 
This allows the identification of dynamical units, each with its own function, that together comprise the dynamics and function of the entire network. 
The result highlights a drawback of averaging over a network: the loss of information 
about the individual functional units, and their temporal relations, that yield network function.

A functional decomposition is natural from an evolutionary point of view: the goal of an evolutionary process is optimization of 
(network) function. Thus, rather than asking how network dynamics can be understood in terms of the dynamics of constituent 
subnetworks -- the classical reductionist question -- the issue is how network function can be understood in terms of the 
function of network  constituents. Our result not only gives a satisfactory answer to Alon's question for a 
large class of functional asynchronous networks but suggests an approach to determining key structural features of 
components of a complex system that is partly based on an evolutionary model for development of structure.
Starting with a small well understood model, such as the class of functional feedforward networks described above, 
we propose looking at bifurcation in the context of optimising a network function -- for example, understanding the effect on function
when we break the feedforward structure by adding feedback loops. 

\subsection*{Relations with distributed networks}
An underlying theme and guide for our formulation and theory of asynchronous networks is that of efficiency and cost in large distributed networks.
We recall the guidelines given by Tannenbaum \& van Steen~\cite[page 11]{TvS} for scalability in large distributed networks 
(italicised comments added):
{\small 
\begin{itemize}
\item No machine has complete information about the (overall) system state. [\emph{communication limited}]
\item Machines make decisions based only on local information. [\emph{decentralized control}]
\item Failure of one machine does not ruin the algorithm. [\emph{redundancy}]
\item There is no implicit assumption of global time.
\end{itemize}
} 
Of course, networks dynamics, in either technology, engineering or biology, is likely to involve a complex mix of synchronous and asynchronous components.
In particular, timing (clocks, whether local or global) may be used to trigger the onset of events or processes as part of a weak mechanism for 
centralized control or resetting. Evolution is opportunistic -- whatever works well will be adopted (and adapted) whether synchronous or asynchronous in character.
In specific cases, especially in biology, it may be a matter of debate as to which viewpoint -- synchronous or asynchronous -- is the most appropriate.
The framework we develop is sufficiently flexible to allow for a wide mix of synchronous and asynchronous structure at the global or local level.

\subsection*{Past work}
Mathematically speaking, much of what we say has significant overlap with 
other areas and past work. We cite in particular, the general area of nonsmooth dynamics,
Filippov systems and hybrid systems (for example, \cite{Fi,As,Min,BBCK})
and time dependent network structures (for example, \cite{BBKP,LuAtayJost,GBG,Holme}). While the theory of nonsmooth dynamics 
focuses on problems in control, impact, and engineering,
rather than networks, there is significant work studying bifurcation (for example~\cite{KRG,bbc}) which is likely to apply
to parts of the theory we describe. 
From a vast literature on networks and dynamics, we cite Newman's text~\cite{Newman1} for a comprehensive 
introduction to networks, and the very recent tutorial of Porter \& Gleeson~\cite{PG} which addresses questions related to our work,
gives an overview and introduction to dynamics on networks, and includes an extensive bibliography of past work.

\subsection*{Outline of contents}
After preliminaries in section~\ref{generalities}, 
we give in section~\ref{asyncsec} vignettes (no technical details) of several asynchronous networks from technology, engineering, 
transport and neuroscience.  
In section~\ref{sec:AsynNetModel}, we give a mathematical formulation of an asynchronous network with a focus on event driven dynamics,
and constraints. We follow in section~\ref{ampex} with two more detailed examples of asynchronous networks 
including an illuminating and simple example of
a transport network which requires minimal technical background yet exhibits many characteristic features of an asynchronous network, 
and a discussion of power grid models that indicates both the limitations and possibilities of our approach.
We conclude with a discussion of products of asynchronous networks in section~\ref{sec:Products} that illuminates some of the
subtle features of the event map. In part II~\cite{BF1}, we develop the
theory of functional asynchronous networks and give the statement and proof of the modularization of dynamics theorem.

\medskip
\noindent \emph{Dedication.}
The genesis of this paper lies in a visit in 2010 by one us (MF) to work with Dave Broomhead at Manchester University.
Dave was very interested in asynchronous processes and local clocks. During the visit, he came up with a 2 cell
random dynamical systems model for the investigation of asynchronous dynamics and local time.  This 2 cell model provided the
seed and stimulus for the work described in this paper.  Dave's illness and untimely death
sadly meant that our planned collaboration on this work could not be realized.

\section{Preliminaries and generalities on networks}
\label{generalities}
\subsection{Notational conventions}
We recall a few mostly standard notational conventions used throughout.
Let $\pint$ denote the natural numbers (the strictly positive integers),
$\intg_+$ denote the set of nonnegative integers,
$\real_+ = \set{x \in \real }{ x \ge 0}$, and
$\real(>0) = \set{x \in \real_+ }{ x \ne 0 }$. Given
$n \in \pint$, define $\is{n} \defo \sset{1,\dotsc,n}$. Let
$\iz{n} \defo \sset{0, 1, \dotsc, n}$ and, more generally, for
$A\subset\pint$ define $\bu{A}=A\cup\sset{0}$.

\subsection{Network notation} 
We establish our conventions on network notation; we  follow these throughout this work.

Suppose the network $\Nn$ has $k$ nodes, $N_1,\dotsc,N_k$. Abusing notation, we often let $\Nn$ denote both network and the set of nodes $\{N_1,\dotsc,N_k\}$.
Denote the state or phase space for $N_i$ by $M_i$\footnote{We assume the phase space for each node is a connected differential manifold -- usually a domain in $\real^n$ or
the $n$-torus, $\mathbb{T}^n$.} and set $\Mb = \prod_{i \in \is{k}} M_i$ -- the network phase space. 
Denote the state of node $N_i$ by $\xx_i \in M_i$ and the
network state by $\XX = (\xx_1,\dotsc,\xx_k) \in \Mb$.  

Smooth dynamics on $\Nn$
will be given by a system of ordinary differential equations (ODEs) of the form
\begin{equation}
\label{EQ1}
\xx_i' = f_i(\xx_i;\xx_{j_1},\dotsc, \xx_{j_{e_i}}),\;\; i \in \is{k},
\end{equation}
where the components $f_i$ are at least $C^1$ (usually $C^\infty$ or analytic) and
the following conditions are satisfied. \\

\noindent (N1) For all $i \in \is{k}$, $j_1 < \dotsc < j_{e_i}$ are distinct
elements of $\is{k} \sm \{i\}$ (and so $e_i < k$). \\

Set $J(i) = \{j_1, \dotsc, j_{e_i}\} \subset \is{k}$, $i \in \is{k}$ ($J(i)$ may be empty). \\

\noindent (N2) For each $i \in \is{k}$, the evolution of $N_i$ depends nontrivially on the state of
$N_j$, $j \in J(i)$, in the sense that there exists a
choice of $\xx_i\in M_i$ and $\xx_{j_s} \in M_{j_s}$, $j_s \in J(i) \sm \{j\}$,
such that $f_i(\xx_i;\xx_{j_1},\dotsc, \xx_{j_{e_i}})$ is not constant as a function of $\xx_j$.  \\

\noindent (N3)  We generally assume the evolution of $N_i$ depends 
on the state of $N_i$. If we need to emphasize that $f_i$ does not depend on $\xx_i$ 
in the sense of (N2), we write
$f_i(\xx_{j_1},\dotsc, \xx_{j_{e_i}})$, if $J(i) \ne \emptyset$. 
If $J(i) = \emptyset$, we regard the dependence of~$f_i$ on~$\xx_i$ as 
nontrivial iff~$f_i$ is not identically zero and then write 
$f_i(\xx_i)$. Otherwise $f_i \equiv 0$.

\begin{rem}
\label{remdep}
Given network equations~\Ref{EQ1} which do not satisfy (N1--3), we can first redefine the $f_i$ so as to satisfy (N1). Next we
remove trivial dependencies so as to satisfy (N2). Finally, we check for the dependence of $f_i$ on the internal state $\xx_i$ and modify the $f_i$ as necessary to achieve (N3). 
If $f_i \equiv 0$, we can remove the node from the network.
Consequently, it is no loss of generality to always assume that (N1--3) are satisfied, with $f_i \not\equiv 0$. A consequence is that any \emph{network vector field}
$\f=(f_1,\dotsc,f_k):\Mb \arr T\Mb$
can be uniquely written in the form~\Ref{EQ1} so as to satisfy (N1--3).
\end{rem}

Let $M(k)$ denote the space of $k\times k$ \ZO matrices $\beta = (\beta_{ij})_{i,j \in \is{k}}$ with coefficients
in $\sset{0, 1}$ and $\beta_{ii} = 0$, all $i \in \is{k}$. 
Each $\beta \in M(k)$ determines uniquely a directed graph $\Gamma_\beta$ with vertices
$N_1,\dotsc,N_k$ and directed edge $N_j \arr N_i$ iff $\beta_{ij} = 1$ and $i \ne j$. The matrix $\beta$ is the
adjacency matrix of $\Gamma_\beta$.  We refer to $\beta$ as a \emph{connection structure} on $\Nn$.

If $\f:\Mb \arr T\Mb$ is a network vector field satisfying (N1--3), then $\f$
determines a unique connection structure $C(\f) \in M(k)$ with associated graph $\Gamma_{C(\f)}$. In order to specify the graph uniquely,
it suffices to specify the set of directed edges.

We define the \emph{network graph} $\Gamma = \Gamma(\Nn,\f)$ to be the directed graph $\Gamma_{C(\f)}$. Thus, $\Gamma(\Nn,\f)$ has node
set $\Nn = \{N_1,\dotsc,N_k\}$ and a directed connection $N_j \arr N_i$ will be an edge of $\Gamma$
if and only if $j \ne i$ and the dynamical evolution of $N_i$ depends nontrivially on the state of $N_j$. 
\begin{rem}
\label{remdep2}
Our conventions are different from formalisms involving
multiple edge types (for example, see \cite{GST,AADF} for continuous dynamics and \cite{AF1} for discrete dynamics).
We allow at most one connection between distinct nodes of the network graph and do not use self-loops: connections encode dependence.
\end{rem}

\subsubsection{Additive input structure}

In many cases of interest, we have an additive input structure~\cite{F2014} and the components $f_i$ of $\f$ may be written
\begin{equation}
\label{EQais}
f_i(\xx_i;\xx_{j_1},\dotsc, \xx_{j_{e_i}}) = F_i(\xx_i) + \sum_{s=1}^{e_i} F_{ij_s}(\xx_{j_s} , \xx_i), 
\;i \in \is{k}.
\end{equation}
Additive input structure implies that there are no interactions between
inputs $N_j, N_k \arr N_i$, as long as $j,k \ne i$, $j \ne k$, and allows us to add and subtract inputs and nodes in a consistent way.
We may think of $\xx_i' = F_i(\xx_i)$ as defining the \emph{intrinsic dynamics} of the node.
\begin{rems}
\label{addrems}
(1) Additive input structure is usually assumed for modelling weakly coupled nonlinear oscillators and is required
for reduction to the standard Kuramoto phase oscillator model~\cite{K1984,EK,HI}.\\
(2) If we identify a null state $\zz_j^\star$ for each node~$N_j$, 
then the decomposition \Ref{EQais} will be unique if we require
$F_{ij}(\zz_j^\star,\xx_i) \equiv 0$\footnote{For
identical phase spaces, assume inputs are asymmetric -- $F_{ij}\ne F_{i\ell}$, if $j \ne \ell$.
For symmetric inputs see~\cite{GST,AF2}.}.
If a node is in the null state then it has no output to other 
nodes and is `invisible' to the rest of the network.  If 
we have an additive structure on the phase spaces $M_i$ (for example, 
each~$M_i$ is a domain in~$\real^n$ or an $n$-torus $\mathbb{T}^n$)
it is natural to take $\zz_i^\star = 0$. \\
(3)  If $M_i = \real^n$ or $\mathbb{T}^n$, $i \in \is{k}$, and 
$F_{ij_s}(\xx_{j_s}, 
\xx_i) = G_{ij_s}(\xx_{j_s} - \xx_i)$, 
$i,j_s\in\is{k}$, the coupling is \emph{diffusive} (see~\cite[\S 2.5]{AF1} for general 
phase spaces). 
\end{rems}

\subsection{Synchronous networks}
Systems of ordinary differential equations like \Ref{EQ1}
give mathematical models for \emph{synchronous} networks. By synchronous, we mean 
nodes are all synchronized to a global clock -- the terminology comes from computer science. Indeed,
if each node comes with a local clock, then all the clocks can be set to the same time provided
that the network is connected (we ignore the issue of delays, but see~\cite{LL}). 
The synchronization of local
clocks is essentially forced by the model and the connectivity of the network graph; nodes cannot evolve independently of one another 
unless the network is disconnected. 

We recall some characteristic features of
synchronous networks.

\begin{description}
\item[Global evolution]
Nodes \emph{never} evolve independently of each other: if the state of any node is perturbed, then generically the
evolution of the states of the remaining nodes changes. 
\item[Stopped nodes]
If a node (or subset of node variables) is at equilibrium or ``stopped'' for a period of time, it will remain stopped for all future time.
If a node has a non-zero initialization, it will never stop (in finite time).
\item[Fixed connection structure]
The connection structure of a synchronous network is fixed:
it does not vary in time and is not dependent on node states:   one system of ODEs suffices to
model network dynamics.
\item[Reversibility] Solutions are uniquely defined in backward time.
\end{description}

\section{Asynchronous networks: examples}
\label{asyncsec}
In this section, we give several vignettes of asynchronous networks that illustrate
the main features differentiating them from synchronous networks. We amplify
two of these examples in section~\ref{ampex} after we have developed our 
basic formalism for asynchronous networks.

\begin{exam}[Threaded and parallel computation]
\label{thread}
Threaded or parallelized computation provides an example of a discrete stochastic asynchronous network.  
Computation based on a single processor (or single core of a processor) proceeds synchronously and sequentially. 
The speed of the computation is dependent on the clock speed of the processor as the processor clock synchronizes the various steps in the computation.
In threaded or parallel computation, computation is broken into
blocks or `threads' which are then computed \emph{independently} of each other at a rate that is partly dependent on the clock rates of the
processors involved in the computation (these need not be identical).  At certain points in the computation, 
threads need to exchange information with other threads. This process 
involves stopping and synchronizing (updating) the thread states: a thread may have to stop and wait for other threads to complete their
computations and update data before it can continue with its own computation.  

Threaded computation is  non-deterministic:  the running and stopping times of
each thread are unpredictable and differ from run to run. 

Each thread has its own clock (determined by its associated processor).  Threads will be unaware of
the clock times of other threads except during the stopping and synchronization events which can be managed
synchronously (central control) or asynchronously (local control).

This example shows many characteristic features of an asynchronous network: nodes (threads) evolving independently of each other,
and stopping, synchronization  and restarting events. The network also has a \emph{function} -- transforming a set of initial data into a set
of final data in finite time -- and there is the possibility of incorrect code that can lead to 
a process that stops before the computation is complete (a deadlock), or errors where threads try to access a resource at the same time (race condition). \examend
\end{exam}

\begin{exam}[Power grids \& microgrids]
\label{pg}

A power grid consists of a connected network of various types of generators and loads connected by transmission lines.
A critical issue for the stability of the power grid is maintaining tight voltage frequency synchronization across the 
grid in the presence of voltage phase differences between generators and loads and variation in generator outputs and loads. 
We refer to Kundur~\cite{Ku} for classical power grid theory, D\"orfler \emph{et al.}~\cite{DCB} or Nishikawa \& Motter~\cite{Motter2015}, for some more recent and
mathematical perspectives, and \cite{Ku2} for general issues and definitions on power system stability.

Historically, power grids have been centrally controlled and one of the main stability issues has been the effect on
stability of a sudden change in structure -- such as the removal of a transmission line, breakdown of a generator or big change in load.
Detailed models of the power grid need to account for a complex multi-timescale stiff system. Typically stability has been analyzed using numerical methods.
However, relatively simple classes of network models for power grids based on frequency and phase synchronization
have been developed which are applicable for the analysis of some stability and control issues, especially those described in the next paragraph. 
We describe these models in more technical detail in section~\ref{ampex}.

Interest has recently focused on renewable (small) energy sources in a power grid (for example, wind and solar power) and
how to integrate microgrids based on renewable sources into the power grid using a mix of centralized and decentralized control. 
Concurrent with this interest is the issue of smart grids: modifying local loads in terms of the availability and real time costs of power.
While the classical power grid model is of a synchronous network, though with asynchronous features such as the effects on stability of 
the breakdown of a connection (transmission line), these problems focus on asynchronous networks. For example, given a microgrid
with renewable energy sources such as wind and solar, time varying loads and buffers (large capacity batteries), how can the microgrid
be switched in and out of the main power grid while maintaining overall system stability? In this case, switching will be determined by
state (for example, frequency changes in the main power grid signifying changes in power demand or changes in the output of renewable sources or battery reserves) 
and stochastic effects (resulting, for example, from load changes and the incorporation of smart grid technology). 
This is already a tricky problem of distributed and decentralized control with just one microgrid; in the presence of many microgrids 
there is the potential problem of synchronization of switching microgrids in and out of the main power grid. Similar problems occur in smart grids~\cite{T}.

Asynchronous features of power grid networks include variation in connection and node structure (separation, or islanding, of microgrids from main power grid),
state dependence of connection structure, synchronization and restarting events (during incorporation of microgrid into main grid).
\examend
\end{exam}

\begin{exam}[Thresholds, spiking networks and adaptation]
\label{spike}
Many mathematical models from engineering and biology incorporate thresholds. For networks,
when a node attains a threshold, there are often changes (addition, deletion, weights) in connections to another nodes. 
For networks of neurons, reaching a threshold can result in a neuron firing (spiking) and short term connections to
other neurons (for transmission of the spike).  For learning mechanisms, such as Spike-Timing Dependent Plasticity (STDP)~\cite{Ge}
relative timings (the order of firing) are crucial~\cite{GK,CD,MDG} and so each neuron, or connection between a pair of neurons, comes
with a `local clock' that governs the adaptation in STDP.  In general, networks with thresholds, spiking and adaptation
provide characteristic examples of asynchronous networks where dynamics is piecewise smooth and hybrid -- a mix of
continuous and discrete dynamics. Spiking networks also highlight the importance of
efficient communication in large networks: spiking induced connections between neurons are brief and low cost. 
There is also no oscillator clock governing all computations along the lines of a single processor computer. 
These examples all fit well into the framework of asynchronous networks but, on account of the background knowledge required,
we develop the theory and formalism elsewhere~\cite{BF2}. \examend
\end{exam}

\begin{exam}[Transport \& production networks]
\label{pass1}
We discuss transport networks first. For simplicity, we work with a single transport mode: trains. Typically, trains have to be scheduled to be
in a station for overlapping times (stopping, restarting, connections and local times) so that passengers can transfer between trains, or stop in a passing loop (so that trains can pass on a 
single track line). In addition, a train can divide into two parts or two trains can be combined (variation in node structure, stopping and synchronization event).
Generally, transport networks will have asynchronous features and exhibit state dependent connection structure, local times and have a strong stochastic component (for 
example, in stopping and restarting times). We develop a simple formal transport model in section~\ref{ampex} (and in part II~\cite{BF1})
that illustrates basic ideas and results in the theory of asynchronous networks but does not require extensive background knowledge.

A simple example of a production
network is a paint mixer. Assume a
controller which accepts inputs -- requested colour -- which, after computation
to find tint weights (`tint code'), signals a request to inject
selected tints according to the tint code into the base paint 
which is then mixed. The output is a can of coloured and
fully mixed paint. Dynamics plays a limited role -- 
except possibly at the mixing stage (for example, if there is a sensor
that can detect an acceptable level of mixing).
For this network, there is a varying connection
structure determined by the signalling and tint injection. 
A characteristic feature
of this, and many production networks, is the large variation in time
scales. Signalling will typically be very fast, injection moderately fast
and mixing rather slow.
If the times of inputs to the controller are stochastic (for example,
follow a Poisson process), then there will be issues of queueing and prioritization
of inputs. If it is intended to maximize usage of the production
facilities and avoid long waits, then it is natural to suppose that there
are several mixing units and the output of the tint units is switched
between mixer units according to their availability.
Of course, the paint mixer may be a small part of a much
larger distributed production network for which we can expect multiple time scales,
switching between production units, changing the output of production
units, stopping or restarting units, etc.
The control of large distributed production systems will typically
involve a mix of decentralized and centralized control. 

Synthesis of proteins at the cellular level can be viewed as a generalization
of the paint mixer model.  We refer the reader to Alon~\cite[8, Chapter 1]{Alon} for background and
more details, especially on transcription networks.
\examend
\end{exam}

We summarize some of the key features of asynchronous networks illustrated by all of the preceeding examples.
\begin{enumerate}
\item Variable connection structure and dependencies between nodes. Changes in connection structure may depend on the state of the system
or be given by a stochastic process.  
\item Synchronization events associated with stopping or waiting states of nodes. 
\item Order of events may depend on the initialization of the system or stochastic effects.  
\item Dynamics is only piecewise smooth and there may be a mix of continuous and discrete dynamics. 
\item Aspects involving function, adaptation and control. 
\item Evolution only defined for forward time -- systems are non-reversible. 
\end{enumerate}

\section{A Mathematical model for asynchronous networks}
\label{sec:AsynNetModel}

In this section we formalize the notion of an asynchronous network. 
Our focus is on deterministic (not stochastic)
and continuous time asynchronous networks which are autonomous (no explicit dependencies on time)
and we use the term `asynchronous network' as synonym for a deterministic and autonomous continuous time asynchronous network.

\subsection{Basic formalism for asynchronous networks}
\label{oview}
Consider a 
network $\Nn$ with $k$ nodes, $N_1,\dotsc,N_k$, and follow the conventions of section~\ref{generalities}:
each node $N_i$ has phase space $M_i$,  and 
$\Mb = \prod_{i = 1}^k M_i$ -- the network phase space.  A network vector field $\f$ on  $\Mb$ is
assumed to satisfy conditions (N1--3) and so 
determines a unique connection structure $C(\f) \in M(k)$
and associated network graph $\Gamma_{C(\f)}$ (no self-loops).

Stopping, waiting, and synchronization are characteristic features of
asynchronous networks. If nodes of a network are stopped or partially
stopped, then node dynamics will be constrained to subsets of node phase
space. We codify this situation by introducing a \emph{constraining node}
$\NS$ that, when connected to $N_i$, implies that dynamics on $N_i$
is constrained. We give the precise definition of constraint shortly (in 4.3); for the present, the reader 
may regard a constrained node as stopped -- node dynamics is defined by the zero vector field.
We only allow connections $\NS \arr N_i$, $i\in \is{k}$, and do not 
consider connections $N_i \arr \NS$, $i \in \is{k}^\bullet$. 
Henceforth we usually always assume there is a constraining node
and let $\Nn = \sset{\NS, N_1,\dotsc,N_k}$ denote the set of nodes. 
We emphasize that the constraining node $\NS$ has no dynamics and no associated phase space.
In a network with no constraints (there are no connections $\NS\arr N_i$), 
the constraining node~$\NS$ plays no role and can be omitted. If we allow constraints, 
there may be more than one type of constraint on a node~$N_i$.   

Suppose that there are $p_i \in \pint$ different constraints on the node $N_i$, $i \in \is{k}$.
Set $\is{P} = (p_1,\dotsc,p_k) \in \intg_+^k$ and let $\CC{k;\is{P}}$
denote the space of $k \times (k+1)$ matrices
$\aa = (\aa_{ij})_{i\in \is{k}, j \in \iz{k}}$ such that
\begin{enumerate}
\item $(\aa_{ij})_{i,j \in \is{k}} \in M(k)$ (and so $\alpha_{ii} = 0, i \in \is{k}$).
\item $\aa_{i0} \in \is{p}_i^\bullet$, $i \in \is{k}$.
\end{enumerate}
If $\alpha \in \CC{k;\is{P}}$, we define the directed graph $\Gamma_\alpha$ by
\begin{enumerate}
\item $\Gamma_\alpha$ has node set $\Nn$.
\item For all $i,j \in \is{k}$, $N_j \to N_i$ is an edge iff $\aa_{ij}=1$.
\item $\NS \arr N_i$ is an edge iff $\aa_{i0} \ne 0$. We write $\NS \stackrel{\ell}{\arr} N_i$ if we
need to specify the constraint corresponding to $\ell \in \is{p}_i$.
\end{enumerate}
We usually abbreviate $\CC{k;\is{P}}$ to $\CC{k}$.
Let $\ECS\in\CC{k}$ denote the empty connection structure (no edges).

If $\aa\in \CC{k}$, let $\az$ denote the first column $(\aa_{i0})_{i \in \is{k}}$ of $\aa$. We have a natural projection
$\pi: \CC{k} \arr M(k)$; $\aa \mapsto \af$, defined by omitting the column $\az$. We write $\aa \in \CC{k}$ uniquely as
\[
\aa = (\az \dd \af).
\]
The column vector $\az$ codifies the connections from the constraining node and
$\af$ encodes the connections between the nodes $\{N_1,\dotsc, N_k\}$.

Let $\aa \in \CC{k}$. We provisionally define an \emph{$\aa$-admissible vector field} $\f=(f_1,\dotsc,f_k)$
to be a network vector field such that for $i,j \in \is{k}$, $i \ne j$, $f_i$ 
depends on the state $\xx_j$ of $N_j$ iff~$\aa_{ij} = 1$.
If there is a connection $\NS \arr N_i$ ($\alpha_{i0} \ne 0$), then there is a nontrivial constraint 
on~$N_i$.
An $\aa$-admissible vector field has constrained dynamics if there are
connections from the constraining node.   If $\alpha = \ECS$, nodes are uncoupled and unconstrained.

\begin{Def}
(Notation and assumptions as above.)
\begin{enumerate}
\item A \emph{generalized connection structure}
$\A$ is a (nonempty) set of connection structures on $\Nn$. 
\item An  \emph{$\A$-structure $\mathcal{F}$} is a set
$\mathcal{F} = \{\f^\aa \dd \aa \in \A\}$ of network vector fields such that
each $\f^\aa\in \mathcal{F}$ is $\aa$-admissible.
\end{enumerate}
\end{Def}

Interactions between nodes in asynchronous networks may vary 
and can be state or time dependent or both. 
We focus on state dependence and assume interactions and 
constraints are determined by the state of the network through an 
\emph{event map} $\Ee: \Mb \arr \A$.

\begin{Def}
Given a network $\mathcal{N}$, generalized connection structure 
$\A$, $\A$-structure $\mathcal{F}$, and surjective event map $\mathcal{E}:\Mb \arr \A$,
the quadruple $\Net = (\mathcal{N},\mathcal{A},\mathcal{F},\mathcal{E})$
defines an \emph{asynchronous network}. 

The network vector field of $\Net$ is
given by the state dependent vector field $\is{F}:\Mb \arr T\Mb$ defined by
\[
\is{F}(\XX) = \f^{\mathcal{E}(\XX)}(\XX), \; \XX \in \Mb.
\]
\end{Def}
\begin{rems}
(1) Subject to simple regularity conditions, which we give later, the network vector field $\is{F}$
will have a uniquely defined semiflow.\\
(2) In the sequel we often use the notation $\Net$ as shorthand for the asynchronous network 
$(\Nn,\A,\F,\Ee)$ (by extension, $\Net^a$ will be shorthand for $(\Nn^a,\A^a,\F^a,\Ee^a)$, etc.).
\end{rems}

\begin{exam}
\label{conex}
Let $k = 2$ and $M_1 = M_2 = \real\times \mathbb{T}$. Suppose that dynamics of the uncoupled node $N_i$
is given by the smooth vector field $V_i(x_i,\theta_i) = (f_i(x_i),\omega_i)$, where $f_i(0)\ne 0$, $\omega_i \in \real$, $i \in \is{2}$.

Assume constrained dynamics for either node is defined on the invariant circle $\{0\} \times \mathbb{T}\subset \real\times \mathbb{T}$ by
the vector field $Z_i(x_i,\theta_i) = (0,\omega_i)$, $i \in \is{2}$.
When both nodes  are constrained ($x_1 =x_2 = 0$), assume (constrained) coupling is defined by the vector field $H = (H_1,H_2)$, where
\begin{eqnarray*}
H_1(x_1,\theta_1,x_2,\theta_2)&=&(0,\omega_1+h(\theta_2-\theta_1)) \\
H_2(x_1,\theta_1,x_2,\theta_2)&=&(0,\omega_2+h(\theta_1-\theta_2)), 
\end{eqnarray*}
and $h:\mathbb{T} \arr \real$ is smooth.
The 2-tori $\{(x_1,x_2)\} \times \mathbb{T}^2$ are invariant by the flow of $H$
for all $(x_1,x_2)\in \real^2$.
Revert to standard (uncoupled and unconstrained) dynamics when $|\theta_1-\theta_2| \le \varepsilon$, where $0 < \varepsilon \ll 1$.
We describe the network dynamics using asynchronous network formalism.

Take the generalized connection structure $\A = \{\ECS,\alpha_1,\alpha_2,\beta\}$, where
$\alpha_i=\NS \arr N_i$, $i \in \is{2}$, and 
$\beta=\NS \arr N_1 \leftrightarrow N_2 \leftarrow \NS$.

Take $\F = \{\f^\gamma \dd \gamma \in \A\}$, where
\[
\f^\ECS  =  (V_1,V_2),\;
\f^{\alpha_1}  =  (Z_1,V_2),\;
\f^{\alpha_2}  =  (V_1,Z_2),\;
\f^{\beta}  =  (H_1,H_2).
\]
Define the event map $\Ee: (\real\times\mathbb{T})^2 \arr \A$ by
\begin{eqnarray*}
\Ee(0,\theta_1,0,\theta_2) & = & \beta,\;\text{if } |\theta_1-\theta_2| > \varepsilon\\
& = & \ECS, \;\text{if } |\theta_1-\theta_2| \le \varepsilon\\
\Ee(0,\theta_1,x_2,\theta_2) & = & \alpha_1,\;\text{if } x_2 \ne 0\\
\Ee(x_1,\theta_1,0,\theta_2) & = & \alpha_2,\;\text{if } x_1 \ne 0\\
\Ee(x_1,\theta_1,x_2,\theta_2) & = & \ECS,\;\text{if } x_1x_2 \ne 0.
\end{eqnarray*}
Network dynamics is given by the vector field $\is{F}(\is{X}) = \f^{\Ee(\is{X})}(\is{X})$.
Trajectories for $\is{F}$ are built from pieces of the trajectories of $\f^\ECS$, $\f^{\alpha_1}$, $\f^{\alpha_2}$, and $\f^{\beta}$. 
Using the condition $f_i(0) \ne 0$, $i \in \is{2}$,
we see easily that $\is{F}$ has a well-defined semiflow $\Phi_t(x_1,\theta_1,x_2,\theta_2)$, 
which is continuous in time $t \ge 0$ but is not necessarily continuous in $(x_1,\theta_1,x_2,\theta_2)$.
\examend
\end{exam}

\subsection{Local foliations}
\label{constraint:sec}
Conditions for a constrained node $N_i$ will be given in terms of \emph{foliations} of open subsets of $M_i$. We
start by recalling basic definitions on foliations (see~\cite{BL} for a detailed review).

A $p$-dimensional
smooth (always $C^\infty$ here) foliation $\mathcal{L}$ of the $m$-dimensional manifold $W$
consists of a partition $\{L_\aa \dd \aa \in \Lambda\}$ of $W$ into
connected sets, called \emph{leaves}, such that for every $x \in W$,
we can choose an open neighbourhood $U$ of $x$ and smooth embedding
$\psi:U \arr \real^m$ such that for each leaf $L_\aa$, the components
of $\phi(L_\aa \cap U)$ are given by equations $x^{p+1} = \text{constant}, 
\ldots,  x^m = \text{constant}$.
Each leaf of a foliation will be an immersed $p$-dimensional submanifold
of $W$.  For our applications, we always assume leaves are properly embedded
closed submanifolds of~$W$,  $p < m$, and
that the manifold $W$ has \emph{finitely} many connected components.
In general, a smooth foliation of the  manifold $W$
will consist of a smooth foliation of each connected component of $W$ 
such that the dimension of leaves is constant on each connected component of $W$.

\begin{exams}
\label{folexam}
(1) Every smooth nonsingular vector field on $W$ defines a 1-dimensional
smooth foliation of $W$ (``flow-box'' theorem of dynamical systems). The
leaves are trajectories of the vector field. \\
(2) If $W = A \times B$, where $A$ and $B$ are manifolds, we have the product foliations $\mathcal{L}(A)$ and $\mathcal{L}(B)$ of $W$ defined by
$\mathcal{L}(A) = \{A \times \{b\} \dd b \in B\}$ and $\mathcal{L}(B) = \{ \{a\} \times B\dd a \in A\}$. Each leaf 
$\mathcal{L}(A)$ is transverse to every leaf of $\mathcal{L}(B)$. More generally, foliations $\mathcal{L}, \mathcal{L}'$ are transverse
if leaves are transverse. 
A foliation of $W$, even by compact 1-dimensional leaves, need not have a transverse foliation. The best-known example is
the Hopf fibration which defines a foliation of $S^3$ into circles.
\examend
\end{exams}

Suppose that $\mathcal{L}$ is a  $p$-dimensional smooth foliation of $W$ with leaves
$\set{L_\aa }{ \aa \in \Lambda}$. The \emph{tangent bundle
along the foliation} $\tau: \mathbb{L} \arr W$ is the smooth vector sub-bundle of the tangent bundle
$TW$ of $W$ defined by
\[
\mathbb{L} = \bigcup_{x \in L_\aa,\, \aa \in \Lambda} T_x L_\aa  \subset TW. 
\]
\subsection{Constrained nodes and admissible vector fields}
\label{constrained:nodes}
Following section~\ref{oview}, we assume $\mathcal{N} = \{N_0,N_1,\dotsc,N_k\}$, where the nodes $N_i$ have phase space $M_i$, $i \in \is{k}$.
Fix a $k$-tuple $\is{P} = (p_1,\dotsc,p_k) \in \intg_+^k$. 
In what follows, we assume $\is{P} \ne \is{0}$.

\begin{Def}
(Notation and assumptions as above.) A family $\is{C} = \{(\is{W}_i,\boldsymbol{\mathcal{L}}_i) \dd i \in \is{k}\}$ is a \emph{constraint structure} on 
$\mathcal{N}$ if,  for all $i \in \is{k}$ with $p_i > 0$,
\begin{enumerate}
\item $\is{W}_i = \{W_i^\ell \dd \ell \in \is{p_i}\}$  is a family of nonempty open subsets of $M_i$.
\item $\boldsymbol{\mathcal{L}}_i = \{ \mathcal{L}_i^\ell \dd \ell \in \is{p_i}\}$, where $\mathcal{L}_i^\ell$ is a smooth foliation of $W_i^\ell$.
\end{enumerate} 
\end{Def}

\begin{rems}
\label{constrain:rems}
(1) If $p_i = 0$, there are no constraints on $N_i$. \\
(2) 
If $p_i = 1$, we set $\is{W}_i = (W_i,\mathcal{L}_i)$ and  $\mathcal{L}_i$ is a smooth foliation of the
nonempty open subset $W_i$ of $M_i$.  If we allow the dimension of leaves to vary 
between different connected components, and the families
$\is{W}_i$ to consist of disjoint open subsets of $M_i$, $i \in \is{k}$, then we can reduce to the
case $p_i \le 1$ by taking $W_i = \bigcup_\ell W_i^\ell$ and $\mathcal{L}_i$ to be the 
foliation determined on $W_i$ by $\mathcal{L}_i|W_i^\ell =  \mathcal{L}_i^\ell$, $\ell \in \is{p_i}$. 
For our applications, it is no loss of generality to assume that $\is{W}_i$ always consists of disjoint open subsets of $M_i$, $i \in \is{k}$.
\end{rems}

We can now give a precise definition of an $\aa$-admissible vector field when there are constraints.

\begin{Def}
\label{Cadmiss}
Fix a constraint structure $\is{C} = \{(\is{W}_i,\boldsymbol{\mathcal{L}}_i) \dd i \in \is{k}\}$ on
$\mathcal{N}$ and 
let $\aa \in \CC{k}$. A smooth vector
field $\is{f} = (f_1,\dotsc,f_k)$ on $\Mb$ is an
\emph{$\aa$-admissible vector field} if 
\begin{enumerate}
\item For $i,j \in \is{k}$, $i \ne j$, $f_i$ depends on $\is{x}_j$ iff $\aa_{ij} = 1$.
\item If $\aa_{i0} = \ell > 0 $, then $f_i$ is tangent to the smooth foliation $\mathcal{L}_i^\ell$ at all points of $W_i^\ell \subset M_i$.  Equivalently, $f_i|W_i^\ell$ 
defines a section of~$\mathbb{L}_i^\ell$, the tangent bundle 
along the foliation $\mathcal{L}_i^\ell$.
\end{enumerate}
\end{Def}
\begin{exam}
\label{stop:constraint}
Suppose that $p_i = 1$ and $\aa_{i0} = 1$ so that there is a constraining connection
$\NS \arr N_i$. Let $\f = (f_1,\dotsc,f_k)$ be $\aa$-admissible,
$M_i = \real^\ell$,  and $\mathcal{L}_i$ be an $(\ell-p)$-dimensional foliation of $M_i$ with
leaves given by $x_{r_1} = c_1,\dotsc, x_{r_p} = c_p$. The components
$f_i^{r_1},\dotsc,f_i^{r_p}$ of $f_i=(f_i^1,\dotsc,f_i^\ell)$ will be
identically zero and the node $N_i$ is partially stopped on each leaf.
This is the situation described in example~\ref{conex} 
where the 1-dimensional foliation of $\real\times\mathbb{T}$ is $\{\{x\} \times \mathbb{T} \dd x \in \real\}$.
\examend
\end{exam}

\begin{rem}
Note that if $\NS \arr N_i \leftarrow N_j$, then the coupling from $N_j$ must respect 
constraints on $N_i$ though now of course the dynamics on
a leaf of $\mathcal{L}_i$ will depend on the state of $N_j$. 
\end{rem}

\subsection{The event map}

Let $\A$ be a generalized connection structure with constraint 
structure $\is{C} = \{(\is{W}_i,\boldsymbol{\mathcal{L}}_i) \dd i \in \is{k}\}$.
Let $\Ee: \Mb \arr \A$ be an event map and recall $\Ee$ is always assumed to be surjective. 

For each $\aa \in \A$, define the \emph{event set} $E^\aa \subset \Mb$ by
\[
E^\aa = \{\XX \in \Mb \dd \mathcal{E}(\XX) = \aa\}.
\]
The event sets $\{E^\aa \dd \aa \in \A\}$ partition the network phase space $\Mb$. We require additional conditions on the event map when there are
constraints. These conditions relate the event sets to the constraint structure $\is{C} $ and are required because 
foliations are only locally defined.

Let $\pi_i:\Mb \arr M_i$ denote the projection map onto the phase space of $N_i$, $i \in \is{k}$. Given $i \in \is{k}$,
$\ell \in \is{p_i}$, define
\[
E_i^\ell = \bigcup_{\{\aa \dd \aa_{i0} = \ell\}} \pi_i(E^\aa) \subset M_i.
\]
\begin{Def}
The event map
$\mathcal{E}: \Mb \to \A$ is \emph{constraint regular} if 
for all $i \in \is{k}$, $\ell \in \is{p_i}$, we have
\[
\overline{E_i^\ell} \subset W_i^\ell
\]
\end{Def} 

Henceforth we assume that event maps are constraint regular.
 
\subsection{Asynchronous network with constraints}
\begin{Def}
\label{defasync}
An asynchronous network
$\mathfrak{N}=(\Nn, \A,\mathcal{F},\mathcal{E})$, with constraint structure $\is{C}$,
consists of 
\begin{enumerate}
\item A finite set $\Nn= \{N_0,N_1,\dotsc,N_k\}$ nodes with associated phase spaces $M_i$, $i \in \is{k}$.
\item A generalized connection structure $\A \subset \CC{k}$.
\item An $\A$-structure $\F = \{\f^\alpha \dd \alpha \in \A\}$ consisting of
admissible vector fields.
\item A (constraint regular) event map $\Ee:\Mb \arr \A$.
\end{enumerate}
\end{Def}
\begin{rem}
\label{comprem2}
If $\A$ consists of a single connection structure~$\aa$ (with or without constraints),
then~$\mathcal{F}$ consists of one vector field
$\f=\f^\aa$, with dependencies given by~$\aa$. We
recover a synchronous network with dynamics defined  by~$\f$
and a fixed connection structure.
\end{rem}

\subsection{Network vector field of an asynchronous network}

An asynchronous network $\Net$ 
uniquely determines the \emph{network vector field} $\is{F}$ by
\begin{equation}
\label{Deq}
\is{F}(\XX) = \f^{\mathcal{E}(X)}(\XX),\; \XX \in \Mb.
\end{equation}

\begin{rems}
\label{abc}
(1) We may give a discrete version of definition~\ref{defasync}: each $\f^\alpha$
will be a network map $\f^\alpha:\Mb\arr\Mb$ and dynamics is defined
by the map $\is{F}:\Mb \arr \Mb$ given by \Ref{Deq}. \\
\noindent (2) Equation \Ref{Deq} defines a \emph{state dependent} dynamical system. Similar
structures have been used in engineering
applications (for example, \cite{Haddad}). We indicate in 
section~\ref{Filippov_dig} a relationship with Filippov systems (this is explored further in~\cite{FA}). 
However, the notion of an integral curve for an asynchronous network is generally 
different from that of a Filippov system, see examples~\ref{path}(2). \\
(3) The network vector field does not uniquely determine $\A$, $\mathcal{E}$ or
$\mathcal{F}$. Usually, however,  the choice of $\A$, $\mathcal{E}$ and $\mathcal{F}$
is naturally determined by the problem. Sometimes it is convenient to view the
network vector field as the basic object and regard asynchronous networks
as being \emph{equivalent} if they define the same network vector field. \\
(4) Since the event sets $\{E^\aa \dd \aa \in \A\}$ partition $\Mb$, the network vector field $\is{F}$
only depends on $\f^{\aa}|E^\aa$. Rather than assume that
$\f^{\aa}$ is smooth on $\Mb$, we could have required that each $\f^{\aa}$ was defined as
smooth map in the sense of Whitney~\cite{Whitney34} on $\overline{E^\aa}$ (and so extends smoothly to $\Mb$).\\
(5) Although the vector fields $\f^\alpha \in \F$ are assumed to satisfy (N1--3), this may \emph{not} hold for
$\f^\alpha|E^\alpha$, $\alpha \in\A$. Sometimes, but \emph{not} always, there is an 
equivalent network $\Net'$ such that the dependencies of
each admissible vector field for $\Net'$ are not changed by restriction to the corresponding event set. 
\end{rems}

\subsection{Integral curves and proper asynchronous networks}
\label{reg:sec}
We start with a definition of integral curve suitable for asynchronous networks.
\begin{Def}
\label{sol:def}
Let $\Net$ be an asynchronous network with network vector field
$\is{F}$.  An \emph{integral curve} or \emph{trajectory} for $\is{F}$ with initial condition $\XX_0 \in \Mb$ is a
map $\bphi:[0,T) \arr \Mb$, $T \in (0,\infty]$, satisfying
\begin{enumerate}
\item $\bphi(0) = \XX_0$.
\item $\bphi$ is continuous.
\item There exists a closed countable subset $D$ of $[0,T)$ such that for every $u \in D$, there exists $v \in D\cup \{T\}$, $v > u$,
such that
\begin{enumerate}
\item $(u,v) \cap D = \emptyset$.
\item $\bphi$ is $C^1$ on $(u,v)$ and $\bphi'(t) = \is{F}(\bphi(t))$, $t \in (u,v)$.
\item $\lim_{t \arr u+} \bphi'(t) = \is{F}(\bphi(u))$.
\end{enumerate}
\end{enumerate}
\end{Def}
\begin{rems}
(1) It is routine to verify that if $\bpsi:[0,S)\arr \Mb$ is another integral curve with initial condition $\XX_0$, then
$\bpsi = \bphi$ on $[0,\min\{S,T\})$ (uniqueness).
As a consequence we can define the \emph{maximal}
integral curve $\bphi:[0,T_{\text{max}}) \arr \Mb$ with initial condition $\XX_0$.
In the sequel, integral curves will be maximal unless otherwise indicated.\\
(2) If $T = \infty$ in the definition, the trajectory $\bphi:\real_+ \arr \Mb$ is \emph{complete}.\\
(3) The set $D$ may have accumulation points in $D$ -- accumulation is always from the left on
account of condition (3a). In the examples we consider $D$ will always be a finite set.\\
(4) Typically, for each $u \in D$, there exists $\alpha \in \A$
such that $\Ee(\bphi(t)) = \alpha$ for $t\in(u,v)$
and so $\bphi((u,v))\subset E^\alpha$.
Condition (3c) implies that if $\Ee(\bphi(u)) = \beta \ne \alpha$, we
must have $\f^\alpha(\bphi(u)) = \f^\beta(\bphi(u))$.
\end{rems}

Without further conditions on the event map, the vector field
$\is{F}$ determined by an asynchronous network $\Net$
may not have integral curves through every point of the phase space. 
\begin{exams}
\label{path}
\begin{figure}[h]
\centering
\includegraphics[width=0.9\textwidth]{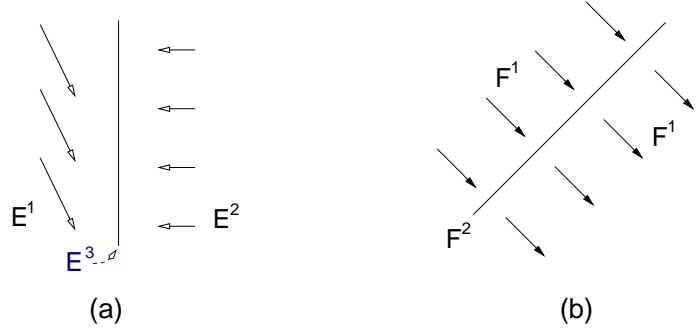}
\caption{Integral curves for the network vector field may not be well defined (a) and may differ from those given by the
Filippov conventions (b).} 
\label{ws}
\end{figure}
(1) Take event sets $E^1 = \{(x_1,x_2) \dd  x_1 \le 0\}$, $E^2 = \real^2 \sm E^1$,
and corresponding constant vector fields $\f^1 = (1,-2)$, $\f^2 = (-1,0)$ (see figure~\ref{ws}(a)). 
Trajectories cannot be continued, according to definition~\ref{sol:def}, once they meet $x_1 = 0$. 
One way round this problem is to define a new event set $E^3 = \partial E^1$ and the \emph{sliding} vector field $\f^3 = \f^1+\f^2 = (0,-2)$.
There is then a complete integral curve through every point of $\real^2$ and the corresponding semiflow $\Phi: \real^2 \times \real_+ \arr \real^2$
is continuous. This approach is based on the Filippov 
construction~\cite[Chapter 2, page 50]{Fi} where we take a vector field in the positive cone defined by
$\f^1, \f^2$ (often the unique convex combination
$\lambda \f^1 + (1-\lambda)\f^2$)
which is tangent to $\partial E^1=E^3$).

\noindent (2) Take event sets  $F^1 = \{(x_1,x_2) \dd  x_1 \ne x_2\}$, 
$F^2 = \{(x_1,x_2) \dd  x_1 = x_2\}$, and corresponding vector fields
$\f^1(x_1,x_2) = (1,-1)$,  $\f^2(x_1,x_2) = (0,0)$ (see figure~\ref{ws}(b) and note that the event $F^2$ models a collision, after which dynamics stops).
Integral curves are defined for all initial conditions in $\real^2$ but the semiflow $\Phi: \real^2 \times \real_+ \arr \real^2$
will not be continuous on $F^2$. Here the Filippov construction gives the wrong network solution -- the diagonal 
$F^2$ is regarded as a removable singularity.

We discuss the relationship between asynchronous
networks and Filippov systems further in section~\ref{Filippov_dig}; see also~\cite{FA}.
\examend
\end{exams}

\begin{Def}
\label{ammen2}
The asynchronous network $\Net$ is 
\emph{proper} if for all $\XX\in\Mb$, the maximal integral curve through $\XX$ is complete: $\bphi_\XX:[0,\infty)\arr \Mb$.
\end{Def}

\begin{rems}
\label{rem:TrajCont}
(1) If  $\Net$ is proper, 
network dynamics is given by a semiflow $\Phi:\Mb \times \real_+ \arr \Mb$.
Although $\Phi(\XX, t)$ will be continuous as a function of $t \in \real_+$, 
it need not be continuous as a function of $\XX \in \Mb$ (see examples~\ref{path}(2)).\\
(2) In many cases of interest, some of the node phase spaces $M_i$ may be open domains in $\real^n$ with with $\partial M_i \ne \emptyset$. 
Here there is the possibility that trajectories may exit $\Mb$: if 
$\bphi=(\bphi_1,\dotsc,\bphi_k)$ is a trajectory, there may exist $i \in \is{k}$ and a smallest $s > 0$ such that
$\bphi_i(s) \defoo \lim_{t \arr s_-}\bphi_i(t) \in \partial M_i$.  
The maximal domain for $\bphi$ is necessarily $[0,s)$.
Under additional hypotheses, it may be possible to extend~$\bphi$ to a complete trajectory by
setting $\is{F}_j \equiv 0$ on $\real^n \sm M_j$, $j \in \is{k}$ (the $j$th component of $\bphi$ is
stopped when it meets the boundary of $M_j$). In this way, we can regard $\Net$ as proper.  
We develop this point of view further in part II~\cite{BF1}. 
\end{rems}

Event sets are typically defined by analytic and algebraic conditions that reflect
logical conditions on the underlying dynamics. 
\begin{Def}
Let $\Net$ be an asynchronous network.
The event structure $\{E^\aa \dd \aa\in\A\}$ of $\Net$ is \emph{regular} if
the event sets $E^\aa$ are all semianalytic subsets\footnote{Defined locally by analytic equations and inequalities.
We refer to \cite{Gibson,Boc} for precise definitions and properties.} of $\Mb$. 
\end{Def}
\begin{rem}
For the examples in this paper, event sets will typically be
semialgebraic -- defined by polynomial equalities and inequalities.
\end{rem} 
\begin{Def}
\label{ammen}
An asynchronous network $\Net$ is \emph{amenable} if
\begin{enumerate}
\item The event structure $\{E^\aa \dd \aa \in \A\}$ is regular.
\item If $\XX \in E^\aa$, $\aa \in \A$, there exists
a maximal $t(\XX)\in  (0, \infty]$
such that the integral curve $\bphi_\XX$ through $\XX$ is defined on $[0,t(\XX))$ and
\[
\bphi_\XX(t) \in E^\aa,\;\; t \in [0,t(\XX)).
\]
\item Either $M_i$ is compact without boundary or $M_i = \real^{n_i}$ and
vector fields have at most linear growth on $M_i$: $\exists a,b > 0$ such that
\[
\|\is{f}_i^\aa(\XX)\| \le a+ b \|\XX\|, \;\XX \in \Mb,\; \aa \in \A.
\]
\end{enumerate}
\end{Def}

\begin{rems}
\label{defammen}
(1) Condition (2) of definition~\ref{ammen} suggests that the vector field $\is{f}^{\aa}$ should in some sense be tangent to $E^\aa$.
The issue of tangency can be made precise using the regularity assumption which implies that $E^\aa$ has a locally finite
stratification into submanifolds without boundary (for example, the canonical Whitney regular stratification of
each event set~\cite{Gibson,Mather}).  This allows us to unambiguously define tangency at points of $E^\aa$ which do not lie in the boundary
of strata. Care is needed at points lying in the boundary of strata and in the example below we indicate how the geometric
structure of the event set can impose strong constraints on associated vector fields.\\
(2) If an event set is a closed submanifold without boundary, it follows from definition~\ref{ammen}(2) that
any trajectory that meets the event set will never leave the event set.\\
(3) In part II we extend definition~\ref{ammen}(3) to allow for trajectories to exit the domain and stop (see remark~\ref{rem:TrajCont}(2)).\\ 
(4) We may extend the definition of amenability to include asynchronous networks which are equivalent to
an amenable network.
\end{rems}

\begin{exams}
\label{amex}
Take $k = 2$, $M_1 = M_2 = \real$. \\
(1) As event sets take the semialgebraic subsets of $\real^2$ defined by
\[
E^1 = \{(x,0) \dd x < 0\},\;E^2 = \{(0,y) \dd y > 0\},\; E^0 = \real^2\sm \bigcup_{i=1}^2 E^i.
\]
The event sets are neither open nor closed. We define associated vector fields $\f^j$, $j \in \iz{2}$, on $\real^2$ by
\[
f^1(x,y) = (1,0),\; f^2(x,y) = (0,-1),\; f^0 = f^1+f^2.
\]
It is a simple exercise to verify that the network is amenable and proper but that the associated semiflow
$\Phi: \real^2 \times \real_+ \arr \real^2$ is not continuous along $E^1$ or $E^2$ (it is continuous at $(0,0)$). 

\noindent (2) Suppose that the event set $E^1$ is the cusp defined by $\{(x,y) \in \real^2 \dd x \ne 0,\;y^2 = x^3\}$ and
$E^2 = \real^2 \sm E^1$.
In this case any smooth ($C^1$ suffices) vector field on $\real^2$
which is tangent to $E^1$ must vanish at $\{(0,0)\}$ (an example of such a vector field is $(2ax,3ay)$, $a \in \real$). If we require amenability,
then all trajectories which meet $E^1$ will never leave $E^1$.
\examend
\end{exams}

\begin{prop}
\label{reglemma1}
An amenable asynchronous network is proper.
\end{prop}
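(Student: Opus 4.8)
The plan is to argue by contradiction. Let $\bphi = \bphi_\XX:[0,T) \arr \Mb$ be the maximal integral curve through an arbitrary $\XX \in \Mb$ (it exists and is unique by the remarks following Definition~\ref{sol:def}, with $T>0$ guaranteed by Definition~\ref{ammen}(2)), and suppose $T < \infty$. There are exactly two ways completeness can fail on $[0,T)$: the trajectory can escape every compact set as $t \arr T$ (finite-time blow-up), or it can remain in a compact set but fail to be continuable past $T$. I would rule out the first using the growth hypothesis Definition~\ref{ammen}(3), and defeat the second using the local-existence/forward-invariance hypothesis Definition~\ref{ammen}(2).

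First I would show there is no blow-up. The key observation is that, directly from Definition~\ref{sol:def}(3b,3c), $\bphi$ is continuous on $[0,T)$ and possesses a right derivative at \emph{every} point equal to $\is{F}(\bphi(t))$: on the open smooth pieces this is (3b), and at the switching points $u \in D$ it is (3c). Hence the continuous scalar function $g(t) = \norm{\bphi(t)}$ has upper right Dini derivative $D^{+}g(t) \le \norm{\is{F}(\bphi(t))} \le a + b\,g(t)$, where I absorb the componentwise bounds of Definition~\ref{ammen}(3) into constants $a,b$ uniform over the \emph{finite} set $\A \subset \CC{k}$. A standard Dini comparison (Gr\"onwall) argument then gives an explicit bound on $g$ over $[0,T)$, so $\bphi$ stays in a fixed compact set $K \subset \Mb$; in the compact-manifold alternative of Definition~\ref{ammen}(3) this boundedness is automatic. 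I emphasise that this step uses only continuity and the everywhere-defined right derivative, so it is insensitive to whether the switching set $D$ accumulates at $T$.

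Next I would extract a limit point and continue. On $K$ the finitely many continuous fields $\f^\aa$ are uniformly bounded, say $\norm{\is{F}} \le C$ on $K$, and applying the same Dini comparison to $t \mapsto \norm{\bphi(t) - \bphi(s)}$ shows $\bphi$ is $C$-Lipschitz; since $T < \infty$, the Cauchy criterion gives $\XX^\star \defoo \lim_{t \arr T-}\bphi(t) \in K \subset \Mb$. Because the event sets $\{E^\aa \dd \aa \in \A\}$ partition $\Mb$, we have $\XX^\star \in E^\aa$ for some $\aa \in \A$, and Definition~\ref{ammen}(2) furnishes an integral curve $\bpsi$ through $\XX^\star$ defined on $[0,t(\XX^\star))$ with $t(\XX^\star) > 0$. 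I would then check that the concatenation of $\bphi|[0,T)$ with $\bpsi$ (extending the switching set by adjoining $T$ together with the shifted switches of $\bpsi$) is again an integral curve on the strictly larger interval $[0,T + t(\XX^\star))$: continuity at $T$ holds by the construction of $\XX^\star$, and the one-sided condition (3c) at $T$ is exactly the $t \arr 0+$ property of $\bpsi$ at $\XX^\star$. This contradicts the maximality of $T$, so $T = \infty$ and, $\XX$ being arbitrary, $\Net$ is proper.

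The step I expect to be the main obstacle is the bookkeeping around a possibly non-finite switching set $D$: one must avoid presuming a last smooth piece before $T$ and avoid any use of left-hand limits of $\bphi'$, since Zeno-type accumulation at $T$ is not excluded a priori. Casting both the no-blow-up estimate and the Lipschitz estimate in terms of the upper right Dini derivative of a scalar comparison function is what makes the argument robust to this, reducing everything to continuity of $\bphi$ together with its everywhere-defined right derivative. Note that Definition~\ref{ammen}(1) is not needed for the implication itself; it underlies the well-posedness of the setup and, via tangency, the local-existence content of Definition~\ref{ammen}(2).
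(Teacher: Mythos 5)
Your proposal is correct and follows essentially the same route as the paper's proof: take the maximal trajectory, use the boundedness/growth hypothesis to show $\lim_{t\arr T-}\bphi(t)$ exists when $T<\infty$, and then invoke definition~\ref{ammen}(2) at the limit point to extend past $T$, contradicting maximality. You supply details the paper elides (the Gr\"onwall/Dini estimate for the noncompact case, the Lipschitz--Cauchy argument for the limit, and the bookkeeping for a possibly accumulating switching set), but the underlying argument is the same.
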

\begin{proof}We give details for the case when $\Mb$ is compact. Fix $\XX \in \Mb$. Suppose that $\bphi_i: [0,s_i) \arr \Mb$ are forward
trajectories for $\is{F}$ through $\XX$, $i \in \is{2}$. Using uniqueness of solutions of
differential equations and definition~\ref{ammen}(2), it is easy to see that $\bphi_1 = \bphi_2$ on $[0,s_1) \cap [0,s_2)$.
It follows that if we define
\[
T = \sup\set{t}{\text{there is a trajectory $\bpsi:[0,t)\arr \Mb$ through $\XX$}}
\]
then we have a unique trajectory $\bphi: [0,T) \arr \Mb$ through $\XX$. If $T = \infty$, we are done. But if $T < \infty$, then
we can extend $\bphi$ to $[0,T]$ by $\bphi(T) = \lim_{t \arr T-} \bphi(t)$ (remarks~\ref{defammen}(3)). If $\bphi(T) \in E^\aa$ then by
definition~\ref{ammen}(2), $\bphi$ extends to $[0,T+t(\bphi(T)))$, where $t(\bphi(T)) > 0$. This contradicts the maximality of $T$ and so
$T = \infty$.
\end{proof}
\begin{rems}
(1) Proposition~\ref{reglemma1} says nothing about the number of changes in 
the event map that occur along a trajectory. Without further conditions, 
there may be a countable infinity of changes with countably many accumulation points (see definition~\ref{sol:def}
and note the analogy with Zeno-like behaviour~\cite{BBCK}).\\
(2) As shown in examples~\ref{amex}(1), the semiflow given by proposition~\ref{reglemma1} need not be continuous (as
a function of $(\XX,t)$). \\
(3) Amenability is sufficient but not necessary for properness.
\end{rems}

\subsection{Semiflows for amenable asynchronous networks}
Assume $\Net$ is an amenable asynchronous network with network vector field $\is{F}$.
For each $\aa \in \A$, denote the flow of $\is{f}^\aa$ by $\Phi^\aa$.

Let $\XX \in \Mb$ and $\bphi:\real_+ \arr \Mb$ be the maximal integral curve through $\XX$ for $\is{F}$. If follows from
the definition of integral curve and amenability that there is a countable closed subset $D = D(\XX)$ of $\real_+ \cup \{\infty\}$ such that for each
$u \in D$, there exist unique $\alpha \in \A$, $v = v(u) \in D$ such that 
\[
(u,v) \cap D = \emptyset, \; \Ee(v) \ne \alpha, \; \bphi([u,v)) \subset E^\alpha.
\]
(For $\Ee(u) = \alpha$ we need amenability.)
\begin{prop}
Let $\Net$ be an amenable asynchronous network. Suppose that for all $\XX \in \Mb $,  $D(\XX)$ is finite
and set $D(\XX) = \{ t^\XX_j \dd 0 = t^\XX_0 < t^\XX_1 < \dotsc < t^\XX_N < t^\infty_{N+1} = \infty\}$, $\alpha^\XX_j = \Ee(\bphi(t^\XX_j))$, $j \in \iz{N}$.
The semiflow $\Phi: \Mb \times \real_+ \arr \Mb$ for $\is{F}$ is given in terms of the flows $\Phi^\aa$ by
\[
\Phi_\XX(t) = \Phi^{\aa_p^\XX}(\cdots \Phi^{\alpha_1^\XX}(\Phi^{\aa^\XX_0}(\XX,t^\XX_1),t^\XX_2-t^\XX_1)\cdots, t -t^\XX_p),
\]
where $t \in [t^\XX_p,t^\XX_{p+1})$,  $p \in \iz{N}$. 
\end{prop}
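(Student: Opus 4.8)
The plan is to show that on each of the finitely many intervals cut out by $D(\XX)$ the maximal integral curve coincides with the flow of a single admissible vector field, and then to glue these pieces together using continuity of $\bphi$ at the breakpoints. Throughout I write $\bphi = \bphi_\XX$ for the maximal integral curve through $\XX$, which is complete by Proposition~\ref{reglemma1}, and I set $\XX_p = \bphi(t^\XX_p)$, so that $\XX_0 = \XX$.

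First I would fix $p \in \iz{N}$ and examine the restriction of $\bphi$ to the half-open interval $[t^\XX_p, t^\XX_{p+1})$. By the paragraph preceding the statement we have $\bphi([t^\XX_p,t^\XX_{p+1})) \subset E^{\alpha_p^\XX}$ with $\alpha_p^\XX = \Ee(\bphi(t^\XX_p))$, so the defining equation~\Ref{Deq} gives $\is{F}(\bphi(t)) = \is{f}^{\alpha_p^\XX}(\bphi(t))$ for every $t$ in this interval. Combining Definition~\ref{sol:def}(3b) on the open interval with the one-sided condition~(3c) at the left endpoint, $\bphi$ solves the autonomous initial value problem $\bphi'(t) = \is{f}^{\alpha_p^\XX}(\bphi(t))$, $\bphi(t^\XX_p) = \XX_p$, on all of $[t^\XX_p, t^\XX_{p+1})$. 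Since each $\is{f}^\aa$ is smooth, uniqueness for ODEs yields
\[
\bphi(t) = \Phi^{\alpha_p^\XX}(\XX_p, t - t^\XX_p), \quad t \in [t^\XX_p, t^\XX_{p+1}).
\]

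Next I would match consecutive pieces. Continuity of $\bphi$ (Definition~\ref{sol:def}(2)) together with continuity of the flow $\Phi^{\alpha_{p-1}^\XX}$ in its time argument gives $\XX_p = \lim_{t \arr (t^\XX_p)_-} \Phi^{\alpha_{p-1}^\XX}(\XX_{p-1}, t - t^\XX_{p-1}) = \Phi^{\alpha_{p-1}^\XX}(\XX_{p-1}, t^\XX_p - t^\XX_{p-1})$; here amenability~\ref{ammen}(3) (compactness or linear growth) guarantees the relevant flow exists up to the finite time $t^\XX_p - t^\XX_{p-1}$, so that no blow-up occurs. Iterating this one-step relation upward from $\XX_0 = \XX$ then expresses $\XX_p$ as the nested composition of flows evaluated at the successive time differences $t^\XX_1, t^\XX_2 - t^\XX_1, \dotsc, t^\XX_p - t^\XX_{p-1}$. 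Substituting this expression for $\XX_p$ into the displayed formula for $\bphi(t)$ yields exactly the asserted formula for $\Phi_\XX(t)$ on $[t^\XX_p, t^\XX_{p+1})$, and letting $p$ range over $\iz{N}$ covers all of $\real_+$.

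The routine ODE bookkeeping aside, the one point requiring care is the behaviour at the event times $t^\XX_p$: one must verify that the \emph{left} limit $\XX_p$ produced by the incoming segment is precisely the initial condition for the outgoing segment governed by $\is{f}^{\alpha_p^\XX}$, and that the right-derivative there matches $\is{f}^{\alpha_p^\XX}(\XX_p)$. This is exactly what Definition~\ref{sol:def}(2) and~(3c) supply: $\bphi$ is continuous and its right-derivative at $t^\XX_p$ equals $\is{F}(\bphi(t^\XX_p)) = \is{f}^{\alpha_p^\XX}(\XX_p)$, because $\XX_p \in E^{\alpha_p^\XX}$. Hence the gluing is consistent and the flow pieces concatenate without ambiguity.
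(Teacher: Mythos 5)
Your proposal is correct and follows essentially the same route as the paper, whose proof is a one-line observation that on each interval $[t^\XX_p,t^\XX_{p+1})$ the trajectory solves $\XX'=\f^{\alpha_p^\XX}(\XX)$ with initial condition $\XX_p=\Phi_\XX(t^\XX_p)$; you simply spell out the ODE uniqueness, the one-sided derivative condition at each $t^\XX_p$, and the continuity gluing that the paper leaves implicit.
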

\proof For $t \in  [t^\XX_p,t^\XX_{p+1})$, $\Phi^{\aa_p^\XX}(\XX_p,t)$ is the solution to $\XX'(t) = \f^{\alpha_p^\XX}(\XX)$ 
with initial condition $\XX_p = \Phi_\XX(t_p^\XX)$.  \qed

\subsection{Asynchronous networks with additive input structure}
\label{sec:AddStruct}

A natural source of asynchronous networks comes from synchronous networks with additive input structure.
The event map can be either state dependent (with constraints) or stochastic (see the following section).

Fix a~$k$ node synchronous 
network $\Nn$ with additive input structure and network vector field $\f=(f_1,\dotsc,f_k)$ given by. 
\begin{equation} 
\label{EQais2}
f_i(\is{x}_i;\is{x}_{j_1},\dotsc, \is{x}_{j_{e_i}}) = F_i(\is{x}_i) + \sum_{s=1}^{e_i} F_{ij_s}(\is{x}_{j_s} , \is{x}_i), 
\;i \in \is{k}.
\end{equation}
On account of the additive input structure, it is natural to remove and later reinsert connections 
between nodes. 

For $i \in \is{k}$, let $(W_i,\mc{L}_i)$ be the constraint defined by the $0$-dimensional foliation of $W_i = M_i$. 
If dynamics on $N_i$ is constrained, then dynamics is stopped: $\xx_i' = 0$. 
Let~$\Gamma$ be the network graph determined by~\Ref{EQais2} with 
associated \ZO~matrix $\gamma \in M(k)$. Take $\is{P} = (1,\dotsc,1)$ and let $\A \subset \CC{k}$ be 
a generalized connection structure such that
\begin{enumerate}
\item $(0 \dd \gamma) \in \A$,
\item for all $\aa = (\az\dd \af)$ the matrix 
$\af$ defines a subgraph of $\Gamma$, and
\item $\aa_{i0} \in \{0,1\}$ for all $i \in \is{k}$, $\aa \in \A$.
\end{enumerate}
For each $\aa\in\A$, define the $\aa$-admissible vector field 
$\is{f}^\aa$ by
\begin{equation*} 
f^\aa_i(\is{x}_i;\is{x}_{j_1},\dotsc, \is{x}_{j_{e_i}}) = (1-\aa_{i0})\left(F_i(\is{x}_i) + \sum_{s=1}^{e_i} \aa_{ij_s}F_{ij_s}(\is{x}_{j_s} , \is{x}_i)\right),
\;i \in \is{k},
\end{equation*}
and set $\F = \{\f^\alpha \dd \alpha \in \A\}$.
If we choose an event map $\Ee: \Mb\to\A$ and 
take $\F = \set{\is{f}^\aa}{\aa\in \A}$, then 
$\Net = (\Nn, \A, \F, \Ee)$ is an asynchronous network. 
We refer to $\Net$ as an \emph{asynchronous 
network with additive input structure}.

For $\alpha \in \A$, $i \in \is{k}$, let $J(i, \aa) = \{j \dd \alpha_{ij} =1, j \in \iz{k}\}$ be the
dependency set of $f_i^\alpha$.

\begin{Def}
An asynchronous network $\Net$ is 
\emph{input consistent} if for any node~$N_i$ and $\aa, \beta\in\A$ 
with dependency sets satisfying $J(i, \aa) = J(i, \beta)$
we have 
$f_i^\aa = f_i^\beta$.
\end{Def}

As an immediate consequence of our constructions we have
\begin{lemma}
Asynchronous networks with additive input structure are input
consistent.
\end{lemma}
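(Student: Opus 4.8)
The plan is to read input consistency straight off the explicit formula defining the admissible vector fields; the statement is essentially tautological once one observes that $f_i^\aa$ is assembled from the $i$th row of $\aa$ alone. Fix a node $N_i$ and two connection structures $\aa,\beta \in \A$ with $J(i,\aa) = J(i,\beta)$. First I would record that, because $\aa \in \CC{k}$, the submatrix $\af$ is a \ZO matrix, while condition (3) in the definition of $\A$ gives $\aa_{i0} \in \sset{0,1}$; hence every entry of the $i$th row $(\aa_{ij})_{j \in \iz{k}}$ lies in $\sset{0,1}$. Consequently the map sending the $i$th row to its support $\set{j \in \iz{k}}{\aa_{ij} = 1}$ is a bijection onto the subsets of $\iz{k}$, so the hypothesis $J(i,\aa) = J(i,\beta)$ is equivalent to the coordinatewise equality $\aa_{ij} = \beta_{ij}$ for all $j \in \iz{k}$; in particular $\aa_{i0} = \beta_{i0}$ and $\aa_{ij_s} = \beta_{ij_s}$ for $s = 1,\dotsc,e_i$.

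Next I would examine the defining expression
\[
f^\aa_i(\is{x}_i;\is{x}_{j_1},\dotsc, \is{x}_{j_{e_i}}) = (1-\aa_{i0})\left(F_i(\is{x}_i) + \sum_{s=1}^{e_i} \aa_{ij_s}F_{ij_s}(\is{x}_{j_s} , \is{x}_i)\right),
\]
and observe that the only coefficients of $\aa$ occurring on the right are the scalar $\aa_{i0}$ in the prefactor and the scalars $\aa_{ij_s}$, $s = 1,\dotsc,e_i$, in the sum. These are exactly the row entries indexed by $\sset{0} \cup J(i)$, where $J(i) = \sset{j_1,\dotsc,j_{e_i}}$ is the original dependency set of $f_i$; and since the subgraph condition forces $\aa_{ij} = 0$ for $j \in \is{k} \sm J(i)$, we in fact have $J(i,\aa) \subseteq \sset{0} \cup J(i)$, so no row entry outside this index set is relevant. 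Substituting $\aa_{i0} = \beta_{i0}$ and $\aa_{ij_s} = \beta_{ij_s}$ into the two expressions makes them literally identical, giving $f^\aa_i = f^\beta_i$ as vector field components on $\Mb$. As this argument applies to every node, the network is input consistent.

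The hard part, such as it is, is only to make sure the dependency set pins down the \emph{entire} $i$th row rather than just its restriction to the potentially active indices: this is exactly what the \ZO nature of the entries (guaranteed here by $\is{P} = (1,\dotsc,1)$) together with the subgraph condition $\aa_{ij} = 0$ off $J(i)$ provides. No analytic input or regularity of the $F_i,F_{ij_s}$ is needed, which is why the lemma can be presented as an immediate consequence of the construction.
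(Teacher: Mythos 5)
Your proposal is correct and follows exactly the route the paper intends: the paper states the lemma as an immediate consequence of the construction in the additive-input-structure section, and your argument simply makes that explicit by observing that $f_i^\aa$ is determined by the $\sset{0,1}$-valued $i$th row of $\aa$, which in turn is recovered from its support $J(i,\aa)$. The only remark worth adding is that the subgraph condition is not actually needed for the conclusion, since equality of the full rows already forces equality of the coefficients appearing in the formula.
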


In summary, if $\Net$ is an asynchronous network with additive input structure all the admissible vector fields
are derived from the network vector field of a synchronous network. 
\subsection{Local clocks on an asynchronous network}

In this section we describe \emph{local clocks} on an asynchronous network. We give only brief details sufficient for the examples we give later 
(the general set up appears in~\cite{BF2}). Roughly speaking, a local clock will be associated to a set of nodes, or connections, and may be thought of
thought of as a stopwatch with time $\tau \in \real_+$. In particular, the local clock will run intermittently and switching between 
on and off states will be determined by thresholds. 

Fix  a finite set of nodes $\Nn= \{N_0,N_1,\dotsc,N_k\}$ with associated phase spaces $M_i$, $i \in \is{k}$,
a generalized connection structure $\A \subset  \CC{k}$ and a constraint structure $\is{C}$. Local clocks will be
defined in terms of strongly connected components of elements of $\A$.

Suppose that $\alpha \in \A$ and let $\beta,\gamma$ be distinct strongly connected components of $\alpha$ with
respective node sets $A \subset \is{k}$, $B \subset \iz{k}$. A local time $\tau_{\beta,\gamma} \in \real_+$ will be defined on $\beta$ (or the nodes $A$)
if there exists a connection $N_j \arr N_i$, $j \in B$, $i \in A$.
\begin{exams}
(1) The constraining node $\NS$ is always a strongly connected component of $\alpha$. If $\alpha=\NS\arr N_i$, then
we may take $\beta = \{N_i\}$, $\gamma = \{\NS\}$ and define the local time $\tau_i$ on $N_i$.\\
(2) If $\alpha = \NS \arr N_i \leftrightarrow N_j \leftarrow \NS$, then we may take $\beta = N_i \leftrightarrow N_j$, $\gamma = \{\NS\}$ and
obtain the local time $\tau_{\beta} = \tau_{ij}$ defined on $N_i, N_j$ (or $N_i \leftrightarrow N_j$).
\examend
\end{exams}

Choose a set $\tau_1,\dotsc,\tau_s$ of local times and set 
\[
\mc{T} = \real_+^s = \{\boldsymbol{\tau}=(\tau_1,\dotsc,\tau_s) \dd \tau_1,\dotsc,\tau_s \in \real_+\}.
\]
We extend the phase space of $\Nn$ to $\mc{M} = \Mb \times \mc{T}$. Given $\alpha \in \A$, an $\alpha$-admissible vector field
$\f^\alpha$ on $\mc{M}$ will be a smooth vector field of the form
\[
\f^\alpha(\XX,\boldsymbol{\tau}) = (f_1^\alpha(\XX,\boldsymbol{\tau}),\dotsc,f_k^\alpha(\XX,\boldsymbol{\tau}),h_1,\dotsc,h_s),
\]
where $h_1,\dotsc,h_s \in \{0,1\}$ are constant vector fields. 

Just as before, we define an $\A$-structure $\F$, an event map $\Ee: \mc{M} \arr \A$ and associated asynchronous network $(\Nn,\A,\F,\Ee)$. Our previous definitions and results 
continue to apply. 
\begin{exam}
Suppose $k = 1$, $\Nn = \{N_0,N_1\}$, and $M_1 = \real$. Choose a smooth vector field $f:\real\arr\real$ such that $1 \ge f(x) > 0$ for all $x \in \real$. 
Define $\A = \{\ECS, \alpha = \NS \arr N_1\}$. Define the local time $\tau \in \real_+$ associated to $\alpha$. Set $\mc{M} = \real \times \real_+$. 
Define $\F = \{\f^\ECS,\f^\aa\}$ by
\[
\f^{\ECS}(x,\tau) = (f(x),0),\; \f^{\aa}(x,\tau) = (0,1),\; (x,\tau) \in \mc{M}.
\]
Fix $T > 0$ and define the event map $\Ee: \mc{M} \arr \A$ by
\begin{eqnarray*}
\Ee(x,\tau) & = & \ECS, \;\text{if } x \ne 0\; \text{or } \tau \ge T\\
& = & \alpha, \;\text{if } x = 0\; \text{and } \tau < T
\end{eqnarray*}
The asynchronous network $(\Nn,\A,\F,\Ee)$ is amenable.
If we initialize at $(x_0,0)$, $x_0 < 0$, then the system evolves until $x = 0$,  stops for local time $T$ seconds and then restarts. 
In practice, the local clock is reset to zero after the system restarts.
\examend
\end{exam}

\subsection{Stochastic event processes and asynchronous networks}
Given node set $\Nn$, constraint structure $\is{C}$, generalized connection structure $\A$ and
$\A$-structure $\F$, an
\emph{event process} is a state dependent stochastic process $\Ee_{(t,\XX)}$ taking values in
$\A$.

\begin{Def}
\label{defasync_stoc0}
(Notation as above.)
A \emph{stochastic asynchronous network} $\Net$ is a quadruple
$(\mathcal{N},\mathcal{A},\mathcal{F},\Ee)$, where
$\Ee=\Ee_{(t,\XX)}$ is an event process.
\end{Def}

In the most general case there are no restrictions on the process
$\Ee_{(t,\XX)}$: there may be (stochastic) dependence on time~$t\in\R^+$, pure space
dependence ($\Ee_{(t,\XX)} = \Ee(\XX)$), or both. If $\Ee_{(t,\XX)}$ is independent of time, then
the event process reduces to an event map $\Ee: \Mb\arr \A$. If $\Ee_{(t,\XX)}$ is independent of $\XX$,
then under mild conditions on $\Ee$, such as assuming $\Ee$ is Poisson, integral curves 
on the stochastic asynchronous network
$(\Nn,\A,\F,\Ee_t)$ will be almost surely piecewise smooth.  

We discuss stochastic asynchronous networks in more detail in~\cite{BF2}. We give one simple example here
related to additive input structure.
\begin{exam}
We follow the assumptions and notational conventions of section~\ref{sec:AddStruct} and assume given a synchronous network with additive
input structure and dynamics given by~\Ref{EQais2}. Let $\A$ be a generalized connection structure and $\Ee$ be a time dependent
event process taking values in $\A$. Assume $\Mb$ is compact and the set of times $t_0 < t_1 < \dotsc $ where the 
connection structure changes has Poisson statistics. The stochastic asynchronous network $(\Nn,\A,\F,\Ee)$ is an example of 
a stochastic asynchronous networks with additive input structure. Almost surely, trajectories will be piecewise smooth
and defined for all positive time. 
\examend
\end{exam}

\section{Model examples of asynchronous networks} 
\label{ampex}
In this section, we describe two asynchronous networks using the formalism and ideas developed in the previous section.
We refer also to~\cite{BF2}, for the detailed description of an asynchronous network modelling spiking neurons, adaptivity and learning (STDP). 

\renewcommand{\star}{*}

\subsection{A transport example: train dynamics}
\label{basic}
We use a simple transport example -- a single track line with a passing loop -- to illustrate characteristic features
of asynchronous networks in a setting requiring minimal structure and background knowledge.

Consider two trains $\mathfrak{T}_1, \mathfrak{T}_2$ travelling in opposite directions along a single track
railway line; see figure~\ref{strl}. We assume no central control and no communication between train drivers 
unless both trains are in the passing loop.

\begin{figure}[h]
\centering
\includegraphics[width=\textwidth]{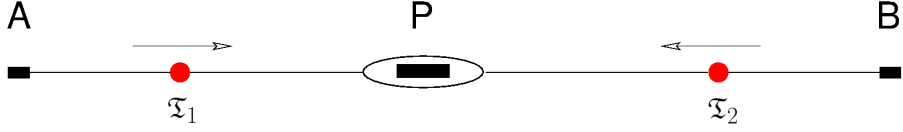}
\caption{Two trains on a single track railway line with a passing loop and stations.}
\label{strl}
\end{figure}

Take as phase spaces for the trains the closed interval $I = [-a,b]$, where $a,b > 0$. Suppose the end points of $I$ correspond to the
stations $A$ (at $-a$) and $B$ (at $b$) and that the passing loop is at $0 \in I$. Assume that the passing loop
is associated with a third station $P$. 

The position of train $\mathfrak{T}_i$ at time $t\ge 0$ will be denoted by $x_i(t) \in I$, $i \in \is{2}$. Suppose
that $x_1(0) = -a$, $x_2(0) = b$. Assume that, outside of the stations $A,B,P$,
the velocity of the trains is given by smooth vector fields $V_1, V_2:I\arr\real$ satisfying
\[
V_1(x) > 0 > V_2(x),\; x \in I.
\]
That is, $\mathfrak{T}_1$ is moving to the right and $\mathfrak{T}_2$ to the left. 
In order to pass each other, the trains must enter the passing loop and stop at $P$. 

Fix thresholds $S,S_1,S_2,T_1,T_2 \in \real_+$. Train $\mathfrak{T}_i$ will depart at time $T_i$, $i \in \is{2}$.
We require that trains have to be together in station $P$ for time $S$ and,  additionally, the train $\mathfrak{T}_i$ 
must be in the station for time $S_i$, $i \in \is{2}$ (this is an additional condition on $\mathfrak{T}_i$ only if $S_i > S$).  
The trains can move out of the station when these thresholds are met.
Note that 
the trains will not generally leave the station at the same time if $S_1>  S$ or $S_2 > S$. We model train dynamics by an asynchronous network.

First we discuss connection structures. Associate the node $N_i$ with train $\mathfrak{T}_i$, $i \in \is{2}$.
Train $\mathfrak{T}_i$ will be  stopped at $P$ only if there is a
connection $\aa_i = \NS \arr N_i$, $i \in \is{2}$. We only allow communication between trains when both trains are
stopped at $P$. In this case, the connection structure will be $\beta =N_0 \arr N_1 \leftrightarrow N_2 \leftarrow N_2$.
If either train is not stopped at $P$, there is no connection between the trains.

As the drivers of the trains cannot communicate (unless both trains are in the station $P$) 
and there is no central control, the times associated 
with the thresholds $S_1,S_2$ will be local times. 
Specifically, when train $\mathfrak{T}_i$ stops at $P$, the driver's stopwatch will be started. 
This will be a local time $\tau_i$ for $\mathfrak{T}_i$ and associated to the connection $\NS \arr N_i$,  
When both trains are stopped at $P$, we use a third local time $\tau = \tau_{12}$ associated 
to the connection $N_1 \leftrightarrow N_2$ (alternatively, the drivers could synchronize their stopwatches 
but still the stopwatches may not run at the same speed). 

We describe this setup using our formalism for asynchronous networks.
As network phase space we take 
\[
\mc{M} = \{(\XX,\tb) = (x_1,x_2,\tau_1,\tau_2,\tau) \dd x_1,x_2 \in I,\; \tau_1,\tau_2,\tau \in \real_+\} = I^2 \times\real_+^3.
\]
We define the generalized connection structure $\A = \{ \aa_1, \aa_2,\beta,\ECS\}$ and
let $\F$ be the $\A$-structure given by 
\begin{eqnarray*}
\f^\ECS(\XX,\tb)&=&((V_1(x_1), V_2(x_2)),(0,0,0))\\
\f^{\aa_1}(\XX,\tb)&=&((0, V_2(x_2)),(1,0,0))\\
\f^{\aa_2}(\XX,\tb)&=&((V_1(x_1), 0),(0,1,0))\\
\f^{\beta}(\XX,\tb)&=&((0, 0),(1,1,1))
\end{eqnarray*}
We define the event map $\mathcal{E}: \mc{M}\arr \A$ by
\[ {\small
\mathcal{E}(\XX,\tb)  =  
\begin{cases}
\alpha_1& \text{if }(x_1 = 0, x_2 > 0) \vee ( (x_1 = 0, x_2 \le 0) \wedge (\tau_1 < S_1)) \\
\alpha_2& \text{if }(x_2 = 0, x_1 < 0) \vee ( (x_2 = 0, x_1 \ge 0) \wedge (\tau_2 < S_2)) \\
\beta&\text{if } (x_1 = x_2 = 0) \wedge ((\tau < S) \vee ((\tau_1 < S_1) \wedge (\tau_2 < S_2)))\\
\ECS& \text{otherwise}.
\end{cases}
}\normalsize 
\] 
\noindent Here we have used the logical connectives $\vee$ for \emph{or} and $\wedge$ for \emph{and}.
Dynamics on the asynchronous network $\Net=(\Nn,\A,\F,\Ee)$ is 
given by the vector field $\is{F}(\XX) = \f^{\Ee(\XX)}(\XX)$.
Provided that we initialize so that $x_1(0) < 0 < x_2(0)$, $\tau_1(0) = \tau_2(0) = \tau(0)=0$, it is easy to see that
$\Net$ is amenable.  
\subsubsection{Initialization, termination and function}
\label{sec:FuncExample}
The network $\Net$ has a function: each train has to traverse the line to reach the opposite station. Thus we 
can regard $\Net$ as a \emph{functional asynchronous network}. Formally, 
define \emph{initialization}  and \emph{termination} sets by $\mathbb{I}_1 = \{-a\}$, $\mathbb{I}_2 = \{b\}$
and  $\mathbb{F}_1 = \{b\}$, $\mathbb{F}_2 = \{-a\}$ respectively.
We call $\mathbb{I} = \mathbb{I}_1 \times \mathbb{I}_2$ and $\mathbb{F} = \mathbb{F}_1 \times \mathbb{F}_2$  the
initialization and termination sets for $\Net$.  The function of the network is to get from $\mathbb{I}$ to $\mathbb{F}$ in finite time.

Typically, the thresholds $S,S_1,S_2,T_1,T_2 \in \real_+$ will be chosen stochastically. For example,
the starting times $T_1, T_2$ according to an exponential distribution. If we initialize at $(-a,T_1), (b,T_2)$, and
take $\tau_1(0) = \tau_2(0) = \tau(0) = 0$, it is
easy to verify that solutions will be defined and continuous for all positive time under 
the assumption that a train stops  when it reaches its termination set.
\subsubsection{Adding dynamics}
\label{dynamics}
The trains only ``interact'' when both are stopped at $P$.
We now add a non-trivial dynamic interaction 
when the trains are stopped at $P$. To this end, we
additionally require that
\begin{enumerate}
\item The drivers are running oscillators of approximately the same frequency
(randomly initialized at the start of the trip).
\item When both trains are at $P$, the oscillators are cross-coupled allowing for eventual approximate frequency synchronization.
\item The trains cannot  restart until the oscillators have phase synchronized to within $\varepsilon$, where $0 < \varepsilon < 0.5$.
\end{enumerate}
For example, fix $\omega_1, \omega_2 \in \real$ and define $H(\theta) = k\sin 2\pi \theta$, $\theta \in \mathbb{T}$, where $k > 0$.
Take as network phase space $\mc{M}^\star = \mc{M} \times \mathbb{T}^2$. Define vector fields $\is{h}^\ECS = \is{h}^{\aa_1} = \is{h}^{\aa_2}$
and $\is{h}^{\beta}$ on $\mc{M}^\star$ by
\begin{eqnarray*}
\is{h}^\ECS(\XX,\tb,\theta_1,\theta_2)& =& (\is{0},\is{0},\omega_1,\omega_2)\\ 
\is{h}^\beta(\XX,\tb,\theta_1,\theta_2)& =& (\is{0},\is{0},\omega_1+ H(\theta_2-\theta_1), \omega_2 +  H(\theta_1-\theta_2))
\end{eqnarray*}
Define a new $\A$-structure $\mc{F}^\star$ by
\[
\is{g}^\ECS = \f^\ECS+\is{h}^\ECS,\; \is{g}^{\aa_1} = \f^{\aa_1}+\is{h}^{\aa_1},\; \is{g}^{\aa_2} = \f^{\aa_2}+\is{h}^{\aa_2},\;
\is{g}^\beta =  \f^\beta+ \is{h}^\beta,
\]
where $\f^\ECS, \is{f}^{\aa_1}, \is{f}^{\aa_2}, \is{f}^\beta\in\F$ do not depend on $(\theta_1,\theta_2) \in \mathbb{T}^2$.
Modify the event map $\Ee$ by requiring that $\Ee(\XX,\tb,\theta_1,\theta_2) = \beta$ iff
\[
(x_1 = x_2 = 0) \wedge ((\tau < S)\vee (|\theta_1-\theta_2| > \varepsilon) \vee ((\tau_1 < S_1) \wedge (\tau_2 < S_2)))
\]
In this case, for almost all initializations, the oscillators will eventually phase synchronize to 
within $\varepsilon$ provided that $\sin^{-1}(|\omega_1 - \omega_2|/2k) < 2 \pi \varepsilon$. In particular, if $\omega_1 = \omega_2$, the oscillators
will synchronize unless $|\theta_1(0)-\theta_2(0)|=0.5$. 

\subsubsection{Relations with Filippov systems}
\label{Filippov_dig}
Assume all the thresholds of our model are zero.
Note that if  $S = S_1 = S_2 = 0$, then there is no need for local clocks and we may model by the 
asynchronous network $\Net^\star = (\Nn,\A^\star,\F^\star,\Ee^\star)$, where
$\A^\star = \{\aa_1,\aa_2,\ECS\}$, $\F^\star = \{\f^\ECS,\f^{\aa_1},\f^{\aa_2}\}$, where
$\f^\ECS(\XX)=(V_1(x_1), V_2(x_2))$, $\f^{\aa_1}(\XX)=(0, V_2(x_2))$,
$\f^{\aa_2}(\XX)=(V_1(x_1), 0)$,
and the event map $\Ee^\star$ is defined by
\[
\mathcal{E}^\star(\XX)  =  
\begin{cases}
\alpha_1& \text{if }x_1 = 0, x_2 > 0 \\
\alpha_2& \text{if }x_2 = 0, x_1 < 0\\
\ECS& \text{otherwise}.
\end{cases}
\]
We show dynamics for $\Net^\star$ in figure~\ref{trains1} under the initialization assumption that $x_1(0) \le  0 \le x_2(0)$.
\begin{figure}[h]
\centering
\includegraphics[width=0.75\textwidth]{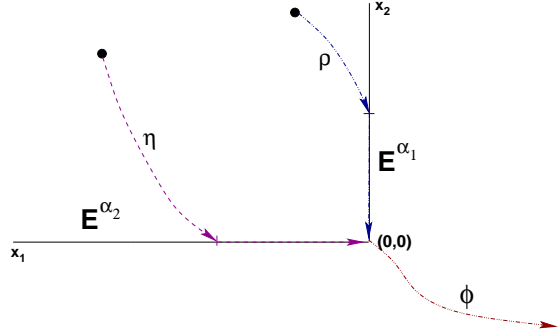}
\caption{Dynamics on a one track line with passing loop.}
\label{trains1}
\end{figure}
Referring to the figure, trajectory $\eta$ corresponds to train $\mathfrak{T}_2$ reaching $P$ first
and restarting only when $\mathfrak{T}_1$ reaches $P$. Train $\mathfrak{T}_1$ reaches $P$ first for the
trajectory $\nu$. Regardless of which train reaches $P$ first, the `exit trajectory'
$\phi$ is always the same and so there is a reduction to 1-dimensional dynamics. 
If both trains arrive simultaneously at $P$, neither stops.

\begin{figure}
\centering
\includegraphics[width=0.8\textwidth]{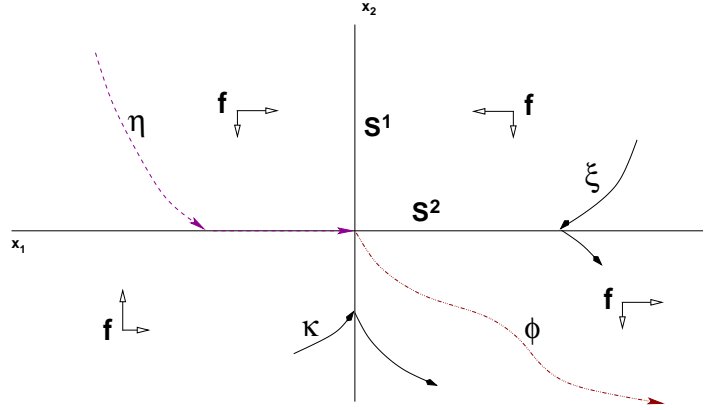}
\caption{Dynamics for the Filippov system. Trajectories $\eta$ and $\phi$ are unchanged; trajectories $\kappa$ and $\xi$
correspond to one train reversing after the other train enters the passing loop and are artifacts of the Filippov representation.}
\label{fil}
\end{figure}
The dynamics shown in figure~\ref{trains1} is suggestive of a Filippov system~\cite{Fi,BBCK} and it
is natural to ask whether there are connections between asynchronous network and 
Filippov systems. Set $\real^2_\circ = \{(x_1,x_2) \dd x_1x_2 \le 0\}$ and observe that dynamics on $\Net^\star$ is
given by a continuous semiflow $\Phi^\star:\real^2_\circ \times \real_+\arr\real_\circ^2$. 
We define a Filippov system on $\real^2$, with continuous semiflow $\Phi:\real^2 \times \real_+ \arr \real^2$, such that
$\Phi = \Phi^\star$ on $\real^2_\circ$. To this end we let $Q_{ij}$, $i,j \in \{+,-\}$ denote the closed quadrants of
$\real^2$ (so $Q_{+-} = \{(x_1,x_2) \dd x_1 \ge 0, x_2 \le 0\}$, etc) and define smooth vector fields
on each quadrant by 
\begin{eqnarray*}
\is{V}_{++}(x_1,x_2) & = & (-V(x_1),V_2(x_2)),\; (x_1, x_2) \in Q_{++} \\
\is{V}_{+-}(x_1,x_2) & = & (V(x_1),V_2(x_2)),\; (x_1, x_2) \in Q_{+-}\\
\is{V}_{--}(x_1,x_2) & = & (V(x_1),-V_2(x_2)),\; (x_1, x_2) \in Q_{--}\\
\is{V}_{-+}(x_1,x_2) & = & (V(x_1),V_2(x_2)),\; (x_1, x_2) \in Q_{-+}.
\end{eqnarray*}
These vector fields uniquely define a smooth vector field $\is{V}$ on the union of the interiors of
the quadrants. We extend $\is{V}$ to a piecewise smooth vector field on $\real^2 \sm \{(0,0)\}$ using 
the Filippov conventions. Thus, we regard the $x_i$-axis as a sliding line $S^i$, $i \in \is{2}$,
and define $\is{V}$ on $\partial Q_{-+} \cap \partial Q_{--}=E^{\aa_2}\subset S^1$ to be the unique convex combination
of $\is{V}_{-+}$ and $\is{V}_{--}$ which is tangent to $S^1$ (in this case $(\is{V}_{-+}+\is{V}_{--})/2$).
Finally define $\is{V}(0,0) = (V_1(0),V_2(0))$.
The piecewise smooth vector field $\is{V}$ has a continuous flow $\Phi:\real^2 \times \real_+\arr \real^2$ 
(integral curves are defined using the standard conventions of piecewise smooth dynamics -- see~\cite{Fi})
and $\Phi|\real_\circ^2 = \Phi^\star$.
Of course, the semiflow on
$\real^2\sm\real_\circ^2$ does not have an interpretation
in terms of trains on a line with a passing loop (see figure~\ref{fil}).

In an asynchronous network, dynamics on event sets
is given explicitly rather than by the conventions used in
Filippov systems.  However, as we have shown,
asynchronous networks can sometimes be locally represented by a Filippov system
(see~\cite{FA} for more details and greater generality). 
This relationship suggests the possibility of
applying methods and results from the extensive bifurcation theory of nonsmooth systems 
to asynchronous networks.
\subsubsection{Combining and splitting nodes}
We conclude our discussion of asynchronous networks modelling transport with a brief 
description of processes defined by combining or splitting nodes (a dynamical version of a 
\emph{Petri Net}~\cite{PNet}). We consider the simplest cases of two trains combining to form a 
single train or one train splitting to form two trains. We only give details for the first case but note that
both situations are easily generalized and also, like much of what we have discussed above,
apply naturally to production networks.

Consider node sets $\Nn^a = \{\NS,N_1,N_2\}$ and $\Nn^b = \{\NS,N_{12}\}$, where $N_1,N_2,N_{12}$ have phase space $\real$
and correspond to trains $\mathfrak{T}_1, \mathfrak{T}_2$, $\mathfrak{T}_{12}$ respectively.   We give a network formulation of
the event where trains $\mathfrak{T}_1, \mathfrak{T}_2$ are combined to form a single train $\mathfrak{T}_{12}$ (see figure~\ref{comb}).
\begin{figure}[h]
\centering
\includegraphics[width=0.75\textwidth]{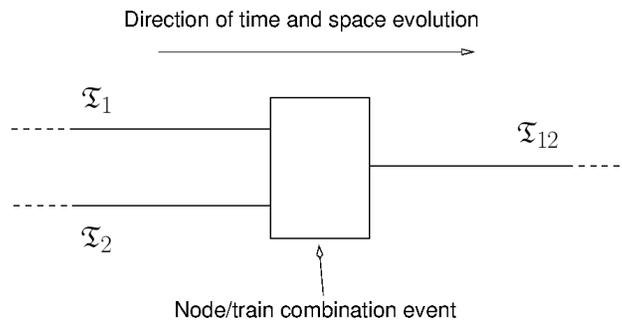}
\caption{Combining two trains into a single train.}
\label{comb}
\end{figure}
Fix vector fields $V_1, V_2, V_{12}$ on $\real$ and assume $V_1(x), V_2(x), V_{12}(x) > 0$ all $x \in \real$.
Define generalized connections structures 
\begin{eqnarray*}
\A^a&=&\{\ECS, \alpha_1 = \NS \arr N_1, \alpha_2 = \NS \arr N_2, \beta = \NS\arr N_1 \leftrightarrow N_2 \leftarrow N_2\}, \\
\A^b&=&\{ \ECS, \gamma=\NS \arr N_{12}\}.
\end{eqnarray*}
Assume a local clock with time $\tau = \tau_{12}$ that is shared between the connection $\beta \in \A^a$ and $\gamma \in \A^b$.
Define network phase spaces for $\Nn^a$, $\Nn^b$ to be $\mc{M}^a=\real^2 \times \real_+$, $\mc{M}^b=\real\times \real_+$ respectively.  
Define the $\A^a$-structure $\F^a$ by
\[
\f^{a,\ECS} = ((V_1,V_2),0),\; \f^{a,\aa_1} = ((0,V_2),0),\; \f^{a,\aa_2} = (V_1,0),\;\f^{a,\beta} = ((0,0),1).
\] 
and the $\A^b$-structure $\F^b$ by $\f^{b,\ECS} = (V_{12},0),\; \f^{b,{\gamma}} = (0,1)$.

Fix thresholds $S_2, S_1 > 0$. The threshold $S_1$ gives the time taken to combine $\mathfrak{T}_1$ and $\mathfrak{T}_2$,
and $S_2$ models the time $\mathfrak{T}_{12}$ spends in the station before leaving. Initialize $\Nn^a$ so that $x_1(0), x_2(0) < 0$ and
$\tau(0) = 0$.  The event map $\Ee^a(\XX,\tau)$ is defined for $x_1, x_2 \le 0$ and $\tau < S$ by
\begin{eqnarray*}
\Ee^a(\XX,\tau)&=&\ECS, \; x_1,x_2 < 0\\
& = & \aa_1, \; x_1 = 0, x_2 < 0 \\
& = & \aa_2, \; x_1 < 0, x_2 = 0\\
& = & \beta ,\; x_1 =x_2 = 0, \tau < S_1
\end{eqnarray*}
The event map $\Ee^b(x_{12},\tau)$ is defined for $x_{12} \ge 0$ and $\tau \ge S_1$ by
\begin{eqnarray*}
\Ee^b(x_{12},\tau)&=&\gamma, \; x_{12}= 0, \tau < S_1 + S_2\\
& = & \ECS,\; \text{otherwise}
\end{eqnarray*}
When $\tau = S_1$, we switch from network $\Nn^a$ to $\Nn^b$.

The splitting construction is similar except that we need to split the local clock for the combined train into two clocks, one for each separated train.

\subsection{Power grids and microgrids}

\subsubsection{Power grids as asynchronous networks} 
We first consider an unrealistic, but simple and instructive model that shows how asynchronous and event dependent effects can naturally fit into the framework of power grids. 
In the following section, we describe how more  realistic models are obtained, their limitations, and where we might expect asynchronous network models to be useful.

We use the simplest model~\cite{FNP} for power grid frequency stability that assumes generators are synchronous, loads are synchronous motors and consider the network of mechanical phase oscillators
\begin{equation}
\label{pn}
\theta_j'' + \alpha_j \theta_j' = P_j - \sum_{i=1}^n k_{ij}\sin (\theta_i - \theta_j), \; j \in \is{n},
\end{equation}
where~$(k_{ij})$ is a a symmetric matrix, all entries positive (zero is
allowed). If $\sum_j P_j = 0$ the system can reach an equilibrium ($P_j < 0$ corresponds to a load).
Let $\Gamma$ be the (undirected) graph determined by the matrix of connections given by $(k_{ij})$.
While the network described by~\Ref{pn} is not asynchronous (and the main
interest lies with the stability of the equilibrium solution), the dynamics
of real-world power grids are subject to factors that cannot
be adequately described by a synchronous model. For integrity of transmission lines, as well as system stability, it is essential that the phase
differences $|\theta_i - \theta_j|$ are bounded away from $\pi/2$. For example, we might require $|\theta_i - \theta_j|\le T_{ij}$, 
where $T_{ij} \in (0,\pi/2)$ will be a threshold determining the safe operational load for the transmission line. This leads to the 
construction of state dependent event maps $\Ee_{ij}: \mathbb{T}^n\arr \{\Gamma,\Gamma\sm \{i\leftrightarrow j\}\}$. If $|\theta_i - \theta_j|> T_{ij}$,
then $\Ee_{ij}(\boldsymbol{\theta}) = \Gamma\sm \{i\leftrightarrow j\}$ and the transmission line between nodes $i$ and $j$ is disconnected.
Equation \Ref{pn} is modified accordingly.  
Similarly, lines or generators may be disconnected because of external events -- such
as lightening strikes or mechanical breakdowns. These can be modelled using a stochastic event map. 

As indicated above, this model is unrealistic
(it is not true, for example, that typical loads are synchronous motors). In the next section, we indicate how more realistic models
are obtained, their limitations, and where we might expect asychronous network models to be useful.

\subsubsection{Network-reduced model for power grids}
We give an overview of the network-reduced coupled phase oscillator model for power grids, largely based on D\"orfler~\cite{DT},
and refer the reader to \cite{Motter2015,DT} for greater generality, alternative models, and the many details we omit. Apart from
describing the model, our goal is part cautionary (it is not evident that general theories of synchronous or asynchronous networks
have much to contribute to stability problems involving structural change), and part comparative with the models we describe later for microgrids.

Assume a power grid with synchronous generators, DC power sources, transmission lines and various types of load.
We assume a reference frequency $\omega_R$ for the power grid, usually $50$Hz or $60$Hz, and note
that frequency synchronization is critical for the stability of the power grid: our equations will be 
written nominally in terms of phases $\theta_i(t)$ but for the models,  we can always replace $\theta_i(t)$ by $\theta_i(t)-\omega_R t$ to get the (same) equations 
for phase deviations that are needed for stability theory (phase differences, but not absolute phases, matter).

Formally, assume given an undirected (connected) weighted graph $\mc{G}$ with  node set $\mc{V} = \is{n}$ and edge
set $\mc{E} \subset \mc{V}^2$. Nodes will be partitioned as
$
\mc{V} = \mc{V}_1 \cup \mc{V}_2 \cup \mc{V}_3,
$
where $ \mc{V}_1$ consists of synchronous generators, $\mc{V}_2$ are DC power sources, and $\mc{V}_3$ 
comprises various types of load (see below and note we do not consider all types of load). 

Each edge $(i,j) \in \mc{E}$, $i \ne j$, is weighted by a non-zero admittance $Y_{ij}\in\cx$ and corresponds to a transmission line. The imaginary part $\mathfrak{I}(Y_{ij})$
is the susceptance of transmission line and $\mathfrak{R}(Y_{ij})$ is the conductance. 
Typically, a high voltage AC transmission line is regarded as lossless ($\mathfrak{R}(Y_{ij}) = 0$) and inductive ($\mathfrak{I}(Y_{ij}) > 0$).  We allow self-loops $i = j$, these will
correspond to loads modelled as impedances to ground (nonzero ``shunt admittances'').

To each node is associated a voltage phasor $V_i = |V_i|e^{\imath \theta_i}$ corresponding to phase $\theta_i$ and magnitude $|V_i|$ of the sinusoidal solution to the circuit equations.

For a lossless network, the power flow from node $i$ to node $j$ is given by
$a_{ij} \sin(\theta_i - \theta_j)$, where $a_{ij} = |V_i||V_j| \mathfrak{I}(Y_{ij})$ gives the maximal power flow
(see Kundur~\cite[Chapter 6]{Ku}). 

\subsubsection{Synchronous generators}
We assume dynamics of synchronous generators are given by
\begin{equation}
\label{sync}
M_i \theta_i'' + D_i \theta_i' = P_{m,i} + \sum_{j=1}^n a_{ij} \sin(\theta_j -\theta_i), \; i \in \mc{V}_1,
\end{equation}
where $\theta_i, \theta_i'$ are generator rotor angle and frequency, $M_i, D_i > 0$ 
are inertia and damping coefficients, and $P_{m,i}$ is mechanical power input.
\subsubsection{DC/AC inverters: droop controllers}
Each DC source in $\mc{V}_2$ is connected to the AC grid via a DC/AC inverter following a frequency droop control law which obeys
the dynamics~\cite{SPDB}
\begin{equation}
\label{dc}
D_i \theta_i' = P_{d,i} +  \sum_{j=1}^n a_{ij} \sin(\theta_j -\theta_i), \; i \in \mc{V}_2.
\end{equation}
\subsubsection{Frequency dependent loads}
We assume the active power demand drawn by load $i$ consists of a constant term $P_{l,i}>0$ and a frequency dependent term
$D_i \theta_i'$, $D_i > 0$, leading to the power balance equation
\begin{equation}
\label{fdl}
D_i \theta_i' = -P_{l,i}  +  \sum_{j=1}^n a_{ij} \sin(\theta_j -\theta_i), \; i \in \mc{V}_{3,f},
\end{equation}
where $\mc{V}_{3,f}$ is the subset of $\mc{V}_3$ consisting of frequency dependent loads. 
Equation~\Ref{fdl} is of the same form as \Ref{dc}, and
we may replace $\mc{V}_2$ by $\mc{V}_2 \cup \mc{V}_{3,f}$ and consider the general equation
\begin{equation}
\label{fdl1}
D_i \theta_i' = \omega_i  +  \sum_{j=1}^n a_{ij} \sin(\theta_j -\theta_i), \; i \in \mc{V}_{2},
\end{equation}
where $\omega_i$ is positive if the node is a DC generator and negative if it is a frequency dependent load.

We can similarly allow for loads which are synchronous motors, incorporate them in $\mc{V}_1$ and consider
\begin{equation}
\label{sync1}
M_i \theta_i'' + D_i \theta_i' = \omega_i + \sum_{j=1}^n a_{ij} \sin(\theta_j -\theta_i), \; i \in \mc{V}_1,
\end{equation}
where $\omega_i$ is positive  if the node is a synchronous generator and negative if it is a synchronous motor.

\subsubsection{Constant current and constant admittance loads}
We assume the remaining loads each require a constant amount of current and have a shunt admittance (to ground).
In this case we have a current balance equation and, through the process of Kron reduction~\cite{DBKron}, may obtain a reduced 
network the equations of which are
\begin{eqnarray}
\label{M1}
M_i \theta_i'' + D_i \theta_i' = \tilde{\omega}_i + \sum_{j=1}^n \tilde{a}_{ij} \sin(\theta_j -\theta_i+\varphi_{ij}), \; i \in \mc{V}_1,\\
\label{M2}
D_i \theta_i' = \tilde{\omega}_i  +  \sum_{j=1}^n \tilde{a}_{ij} \sin(\theta_j -\theta_i+\varphi_{ij}), \; i \in \mc{V}_{2}.
\end{eqnarray}
We refer to~\cite{DB} for the explicit form of the coefficients in (\ref{M1},\ref{M2}).

The original power grid network is typically sparse with many nodes -- $\mc{V}_3$ is large. The process of Kron reduction results in a much smaller
network which will be all-to-all coupled provided that the graph defined by $\mc{V}_3$ is connected~\cite{DBKron}. However, even if the original 
transmission lines are lossless, the phase shifts $\phi_{ij}$ will generally be non-zero and not necessarily always small 
(we refer to~\cite[\S 6.2 Figure 4]{Motter2015} for data from a real power grid network). The presence of phase shifts can and does 
make it harder to frequency synchronize (\ref{M1},\ref{M2}).

From the point of view of transmission line failure in a power grid, even if the removal of an edge still results in a all-to-all coupled reduced network,
many of the coupling coefficients $\tilde{a}_{ij}$ will change.  It is a hard problem, that goes beyond existing analytical theory 
for synchronous and asynchronous networks, to get good insight into whether or not a breakdown will destabilize the network 
(this is irrespective of phenomena like Braess's paradox~\cite{WT,PP}).

\subsubsection{Microgrids}
Assume given a stable power grid network, robust to ``small'' changes in power demand, and consider
the problem of modelling a microgrid and  its combination or separation from the main grid.
We outline structural and logical issues to make transparent
the connection with asynchronous networks and largely ignore dynamics so as to keep the model simple and our discussion short
(we refer to~\cite{DCB,SPDB,DSPB,BGG} for more details and references on microgrids and control from a large and rapidly developing literature in this area).
Assume power generation in the microgrid is from 
DC generators (such as solar power or DC wind power) and that $\mc{V}_1 = \emptyset$ (most motor loads  are not synchronous). 
Assume the microgrid is Kron reduced.

Unlike the power grid model described above, we
allow directed (one way) connections and a constraining node. Consider the simplified network $\Nn = \{\NS,N_B,N_G,N_P\}$, where
the nodes $N_B ,N_G,N_P$ correspond to a large capacity battery (buffer), a DC generator, and main power grid respectively, and define subnetworks $\Nn_M = \{N_B,N_G\}$ (microgrid) and
$\Nn_P = \{N_P\}$ (main power grid).

The battery acts as reserve storage or buffer for the microgrid; in particular to maintain power in the event of intermittent loss of
generated DC power or when the microgrid has been separated ``islanded'' from the main power grid. We suppose battery capacity $B= B(t) \in [0,B_M]$, where $B_M$ corresponds to the battery
being fully charged. We suppose that the DC generator produces power $O = O(t) \in [0,O_M]$, where $O_M$ is the maximum power than can be generated.

The constraining node will play a role when the microgrid is islanded and is to be reconnected to the main power grid: either because the
microgrid has insufficient power for the microgrid loads or because the microgrid has an excess of available power some of which can now be contributed to the 
main power grid. In either case a transition process needs to be implemented where the droop controller for the DC/AC converter needs to bring the AC 
output of the microgrid in precise voltage (phase, frequency and magnitude) synchronization with the state of the power grid at the connection point(s) to the microgrid. 
Similarly, we can constrain when the microgrid is to be islanded from the main grid so that the reduction in power contributed to the main power grid is gradual
and done over an appropriate time scale so as not to destabilize the main power grid. 

Leaving aside the dynamics of islanding and combining the microgrid with the main power grid, the generalized connection structures and control logic we need for
management of the microgrid are complex and depend on several thresholds which may need to be time dependent -- for example, if we
use a time dependent model for the projected microgrid power load. If the microgrid is islanded, we work with $\Nn_M$ and use the generalized connection structure
\[
\A_M = \{\alpha = N_G \arr N_B, \beta=N_B \arr N_G, \ECS\}.
\]
The connection structure $\alpha$ corresponds to the DC generator having sufficient output to supply all power needed for the microgrid load and with a surplus which can be used to 
charge the battery, $\beta$ corresponds to battery and generator providing all necessary power for the microgrid, and
$\ECS$ corresponds to the generator providing all needed power for the microgrid and either there is surplus power available for battery charging or the battery is fully charged.
Thresholds that determine switching between these states are chosen so as to avoid ``chattering'' in the control system. 

If the microgrid is combined with the main power grid, this can be either because battery and DC generators cannot provide sufficient power for the microgrid load or because
the microgrid has surplus power which can be contributed to the main power grid or because the main power grid is stressed (possibly locally detected by
frequency variation) and the battery state of the microgrid is sufficiently high to allow a temporary power contribution to the main grid. As generalized connection structure $\A$
we take the set of connection structures
\[
N_G \arr N_M,\; N_G \arr N_M\leftarrow N_B,
\]
\[
N_B \arr N_G\leftarrow N_M,\; N_M \arr N_G,\;
N_M\arr N_G\arr N_B, 
\]
Each of these connection structures has a natural interpretation. For example, $N_M\arr N_G\arr N_B$ corresponds to the main power grid contributing to both the load of the
microgrid and battery charging while $N_G \arr N_M\leftarrow N_B$ means battery and DC generator are contributing power to the main power grid as well as supplying all the power for the microgrid. 
On the other hand, $N_G \arr N_M$ means DC generated power, but not battery power, is being contributed to the main power grid. 

Of course, what we have described above is highly simplified as we have taken no account of (1) multiple DC generators and batteries within a microgrid, or
(2) multiple microgrids. In the latter case, we need to take care that microgrid switching does not synchronize as this could lead to large destabilizing
changes in load on the main grid.

\section{Products of asynchronous networks}
\label{sec:Products}
We conclude part I with the definition of the product of asynchronous networks and give 
sufficient conditions for an asynchronous network to decompose as a product of two or more
asynchronous networks. Although the methods we use are elementary, the study of products is 
illuminating as it clarifies some subtleties in both the event map and the functional structure that
are not present in the theory of synchronous networks. These ideas play a central role in the
proof of the modularization of dynamics theorem in part II.

\subsection{Products}
\label{sec:prod}

Given $\aa,\beta \in M(k)$, define $\aa \vee \beta\in M(k)$ (the join 
of~$\aa$ and~$\beta$) by
\[
(\aa \vee \beta)_{ij} = \max \sset{\aa_{ij},\beta_{ij}}, \;i,j \in\is{k}
\]
(the max-plus addition of tropical algebra~\cite{HOW}). We have 
$\aa \vee \ECS = \aa$ for all $\aa\in M(k)$. If 
$\A, \mathcal{B}\subset M(k)$ are generalized connection structures,
define the generalized connection structure $\A \vee \mathcal{B}$ by
\[
\A \vee \mathcal{B} = \set{\aa \vee \beta}{\aa \in \A,\; \beta \in \mathcal{B}}. 
\]
Note that $\ECS \in \A \vee \mathcal{B}$ if and only if 
$\ECS \in \A\cap\mathcal{B}$.
Consequently, if $\ECS \in \A\vee\mathcal{B}$, then $\A , \mathcal{B} \subset \A \vee \mathcal{B}$.

Suppose that~$A$ is a nonempty subset of $\is{k}$ containing $k_A$ 
elements. There is a natural embedding of $M(k_A)$ in $M(k)$ defined 
by relabelling the matrices in $M(k_A)$ according to~$A$. Specifically, 
map the matrix $(\aa_{ij})_{i,j \in A} \in M(k_A)$ to the matrix 
$\ah\in M(k)$ defined by
\[
\ah_{ij} = \begin{cases}
\aa_{ij} & \text{for }i, j \in A,\\
0 & \text{otherwise}.
\end{cases}
\]
This embedding extends to an embedding of $\CC{k_A}$ in 
$\CC{k}$ by 
\[
\ah_{i0} = \begin{cases}
\aa_{i0} & \text{for }i  \in A, \\
0 & \text{otherwise.}
\end{cases}
\]
Given disjoint nonempty subsets $A, B$ of~$k$, regard $\CC{k_A}, \CC{k_B}$ 
as embedded in $\CC{k}$. Given $\aa \in \CC{k_A}$, $\beta\in \CC{k_B}$, 
define
\[\aa \vee \beta \defo \ah\vee\bh\in\CC{k}.\]
This extends to the join $\A \vee \mathcal{B}$ of generalized 
connection structures on disjoint sets of nontrivial nodes.

Let $\Nn = \{N_0, \dotsc,N_k\}$ and $A$ be a proper subset of~$\is{k}$.
Define $\mathcal{N}^A = \{N_j \dd j \in \bu{A}\}$ and $\Mb_A = \prod_{i \in A} M_i$.
Denote points in $\Mb_A$ by $\XX_A$. Suppose $B = \is{k}\sm A$. We have 
 $\Nn^{A}\cap \Nn^{B}=\sset{\NS}$ and $\Mb_A \times \Mb_B \approx \Mb$.
If $\is{C}^A$, $\is{C}^B$ are constraint structures on
$\mathcal{N}^A$, $\mathcal{N}^B$ respectively, let $\is{C} = \is{C}^A \vee \is{C}^B$ denote the induced constraint structure
on $\Nn$ -- well defined since constraints depend only on nodes and $A\cap B = \emptyset$.

More generally, given disjoint node sets $\mathcal{N}^A = \{N_j \dd j \in \bu{A}\}$, $\mathcal{N}^B = \{N_j \dd j \in \bu{B}\}$,
we can identify $A,B$ with complementary subsets of $\is{k}$, where $k$ is the total number of elements in $A \cup B$, and then follow the
conventions described above.

\begin{Def}
\label{prodasy}
(Notation and assumptions as above.)
Given asynchronous networks 
$\mathfrak{N}^X = (\Nn^X,\A^X,\mathcal{F}^X,\mathcal{E}^X)$, $X\in\sset{A,B}$, 
define the product 
$\mathfrak{N}^A \times \mathfrak{N}^{B}$ to
be the asynchronous network $\Net =(\Nn,\A,\mathcal{F},\mathcal{E})$ where
\begin{enumerate}
\item $\mathcal{N} = \mathcal{N}^A \cup \mathcal{N}^B$,
\item $\is{C} = \is{C}^A \vee \is{C}^B$,
\item $\A = \A^A \vee \A^B$,
\item $\mathcal{F} = \mathcal{F}^A \times \mathcal{F}^B = \{ \f_A^\aa\times \f_{B}^\beta\dd \aa\in\A^A,\; \beta\in \A^B\}$, and
\item $\Ee$ is defined by $$\mathcal{E}(\XX_A,\XX_B) = \mathcal{E}^A(\XX_A) \vee  \mathcal{E}^B(\XX_B),\; \text{for } (\XX_A,\XX_B) \in \Mb_A\times \Mb_B.$$
\end{enumerate}
\end{Def}

\begin{rem}
If  $\mathfrak{N}^A,  \mathfrak{N}^B$ are proper (or amenable), then so is $\Net^A\times\Net^B$.
\end{rem}
\begin{lemma}
(Notation of definition~\ref{prodasy}.)  
The network vector field on $\mathfrak{N}^A\times\mathfrak{N}^B$ is given by
\begin{equation}
\label{DEQD}
\is{F}(\XX_A,\XX_B) = (\is{f}_A^{\mathcal{E}^A(\XX_A)}(\XX_A),\is{f}_B^{\mathcal{E}^B(\XX_B)}(\XX_B)),
\end{equation}
for all $(\XX_A,\XX_B) \in \Mb_A\times  \Mb_B$.
\end{lemma}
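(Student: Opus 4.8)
The plan is to unwind the defining equation \Ref{Deq} for the network vector field of the product and substitute the product data supplied by definition~\ref{prodasy}. Applying \Ref{Deq} to $\Net = \Net^A \times \Net^B$ gives $\is{F}(\XX_A,\XX_B) = \f^{\Ee(\XX_A,\XX_B)}(\XX_A,\XX_B)$, and by item~(5) of definition~\ref{prodasy} the exponent is $\Ee(\XX_A,\XX_B) = \mathcal{E}^A(\XX_A) \vee \mathcal{E}^B(\XX_B)$. Writing $\aa = \mathcal{E}^A(\XX_A) \in \A^A$ and $\beta = \mathcal{E}^B(\XX_B) \in \A^B$, the task reduces to identifying the element $\f^{\aa\vee\beta}$ of $\F = \F^A \times \F^B$ indexed by the join $\aa \vee \beta \in \A = \A^A \vee \A^B$ and then evaluating it at $(\XX_A,\XX_B)$ under the identification $\Mb_A \times \Mb_B \approx \Mb$.

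The one point that genuinely needs a word of justification is that this indexing is unambiguous, i.e. that $\F$ really is an $\A$-structure in which each $\gamma \in \A$ labels a single admissible vector field. This rests on the disjointness of $A$ and $B$: under the embeddings of $\CC{k_A}$ and $\CC{k_B}$ into $\CC{k}$ described in section~\ref{sec:prod}, a matrix $\ah$ coming from $A$ is supported on the $A$-block (the rows and first-column entries indexed by $A$) while $\bh$ coming from $B$ is supported on the disjoint $B$-block. Hence the join $\aa \vee \beta = \ah \vee \bh$ carries no cross terms, and one recovers $\aa$ and $\beta$ from $\aa\vee\beta$ by restricting to the respective blocks. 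So $(\aa,\beta)\mapsto \aa\vee\beta$ is a bijection of $\A^A \times \A^B$ onto $\A$, and $\f^{\aa\vee\beta} = \f_A^\aa \times \f_B^\beta$ is well defined.

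It then remains only to evaluate the product vector field. Under the identification $\Mb \approx \Mb_A \times \Mb_B$, the field $\f_A^\aa \times \f_B^\beta$ has $A$-components given by $\f_A^\aa$ (depending only on $\XX_A$) and $B$-components given by $\f_B^\beta$ (depending only on $\XX_B$), which is consistent with $(\aa\vee\beta)$-admissibility precisely because the join introduces no connections between $A$ and $B$. Therefore $\f_A^\aa \times \f_B^\beta(\XX_A,\XX_B) = (\f_A^\aa(\XX_A),\f_B^\beta(\XX_B))$, and substituting back $\aa = \mathcal{E}^A(\XX_A)$ and $\beta = \mathcal{E}^B(\XX_B)$ yields exactly \Ref{DEQD}. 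Every step is the substitution of a definition, so there is no analytical obstacle; the step I would be most careful about is the well-definedness of the join-indexed $\A$-structure, which hinges entirely on the disjointness hypothesis $A \cap B = \emptyset$ used throughout section~\ref{sec:prod}.
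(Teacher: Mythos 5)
Your proposal is correct and is exactly the unwinding of definitions that the paper intends by its one-line proof (``Immediate from the definitions''): substitute item~(5) of definition~\ref{prodasy} into \Ref{Deq}, identify $\f^{\aa\vee\beta}=\f_A^\aa\times\f_B^\beta$ via item~(4), and evaluate. Your extra remark that $(\aa,\beta)\mapsto\aa\vee\beta$ is a bijection onto $\A$ because the embedded blocks are disjoint is a worthwhile observation that the paper leaves implicit, but it does not change the route.
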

\begin{proof}Immediate from the definitions. \end{proof}

\subsection{Decomposability}
\label{sec:Indecomposability}

\begin{Def}
\label{indec}
An asynchronous network $(\Nn,\A,\mathcal{F},\mathcal{E})$
is \emph{decomposable} if it can be written as a product of asynchronous
networks. If the network is not decomposable, it is \emph{indecomposable}.
\end{Def}
\begin{exam}
\label{syncexample}
Suppose that $\Nn$ is a synchronous network with connection structure $\alpha \in M(k)$ and
$\aa$-admissible network vector field $\f$ satisfying conditions (N1--3) of section~\ref{generalities}. Since $\alpha$ encodes the dependencies of $\f$
it is trivial that $\Nn$ can be written as a product of two synchronous networks iff the network graph $\Gamma_\aa$ is disconnected.
\examend
\end{exam}

Our aim to find sufficient conditions on an asynchronous network for it to be decomposable.
\begin{Def}
The \emph{connection graph} of the asynchronous network 
$\Net = (\Nn,\A,\F,\Ee)$ is the graph defined by the \ZO matrix 
$\Gamma_\Net\defo \bigvee_{\aa\in\A}\af$.
\end{Def}

\begin{lemma}
\label{lem:NecDecomp}
If an asynchronous network $\Net$ is decomposable,
then the {connection graph}
$\Gamma_\Net$ of $\Net$
has at least two connected components.
\end{lemma}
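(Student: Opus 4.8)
The plan is to unwind Definition~\ref{prodasy} and observe that the join operation $\vee$, being entrywise, respects the block structure that the disjoint node sets $A$ and $B$ impose on the matrices. Suppose $\Net$ is decomposable. By Definition~\ref{indec} we may write $\Net = \Net^A \times \Net^B$ for asynchronous networks $\Net^X = (\Nn^X,\A^X,\F^X,\Ee^X)$, $X \in \{A,B\}$, on disjoint nonempty node sets indexed by complementary subsets $A, B$ of $\is{k}$ with $A \cup B = \is{k}$. By part (3) of Definition~\ref{prodasy}, the generalized connection structure of $\Net$ is $\A = \A^A \vee \A^B$, so every $\gamma \in \A$ has the form $\gamma = \ah \vee \bh$ for some $\aa \in \A^A$, $\beta \in \A^B$, where $\ah,\bh \in \CC{k}$ are the images of $\aa,\beta$ under the relabelling embeddings of $\CC{k_A}, \CC{k_B}$ into $\CC{k}$.

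The key observation is that these embeddings are supported on disjoint index blocks. By construction of the embedding, $\ah_{ij}$ can be nonzero only when $i,j \in A$, and $\bh_{ij}$ only when $i,j \in B$. Discarding the constraining-node column (i.e. passing from $\gamma$ to $\CSf{\gamma}$ via the projection $\pi$, which omits $\CSz{\gamma}$), we see that $\CSf{\ah}$ is supported in the $A \times A$ block and $\CSf{\bh}$ in the $B \times B$ block. Since $A \cap B = \emptyset$, these blocks do not overlap, so for every $\gamma = \ah \vee \bh$ the node–node part $\CSf{\gamma} = \CSf{\ah} \vee \CSf{\bh}$ vanishes on all entries $(i,j)$ with $i \in A, j \in B$ or with $i \in B, j \in A$. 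Taking the join over all $\gamma \in \A$ preserves this property, because a join of matrices that all vanish off the block-diagonal $(A \times A)\cup(B\times B)$ again vanishes there. Hence the connection graph $\Gamma_\Net = \bigvee_{\gamma \in \A}\CSf{\gamma}$ carries no edge joining a node of $A$ to a node of $B$. Consequently no connected component of $\Gamma_\Net$ can meet both $A$ and $B$; since $A$ and $B$ are each nonempty, $\Gamma_\Net$ contains at least one component inside $A$ and at least one inside $B$, so it has at least two connected components.

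I expect no serious obstacle here: the entire content is the bookkeeping that the relabelling embeddings land in disjoint blocks and that $\vee$ is computed entrywise, so block-diagonality is preserved under the join. The only points meriting care are keeping track of the constraining-node column $\CSz{\gamma}$, which is deleted in forming $\CSf{\gamma}$ and so plays no role in $\Gamma_\Net$, and verifying explicitly that the construction of $\A^A \vee \A^B$ genuinely uses the disjointness $A \cap B = \emptyset$, so that no $A$–$B$ cross entries are ever created. It is worth noting that this lemma is only the necessary direction; the converse will require additional hypotheses, since mere disconnectedness of $\Gamma_\Net$ need not let one factor the event map $\Ee$ as $\Ee^A \vee \Ee^B$.
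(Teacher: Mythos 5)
Your proposal is correct and follows essentially the same route as the paper: the paper's proof simply observes that in the product $\Net^A\times\Net^B$ there are no connections between nodes of $\Nn^A$ and $\Nn^B$, so $\Gamma_\Net$ is disconnected, and your write-up is just a more explicit bookkeeping of that same fact via the block-diagonal support of the embedded matrices under $\vee$.
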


\begin{proof} If $\mathfrak{N}$ is decomposable, then 
$\mathfrak{N} = \mathfrak{N}^A\times\mathfrak{N}^B$, where $A,B$ are proper complementary subsets of $\is{k}$. 
Since there are no connections between nodes in $\mathcal{N}^A$ and $\mathcal{N}^B$,
$\Gamma_\Net$ has at least two connected components.
\end{proof}
\begin{rem}\label{dep}
Lemma~\ref{lem:NecDecomp} gives a necessary condition for 
decomposability which is not sufficient. 
There are two issues.
First, the event map encodes information about spatial dependence of node interactions that cannot be 
deduced from the connection graph. Second, the admissible vector fields may have dependencies that are
incompatible with decomposability. 
\end{rem}

\begin{exam}\label{ex:StrucIndecomp}
Let $k = 2$, $M_1 = M_2 = \real$. Define connection structures $\aa_i = \NS \arr N_i$, $i \in \is{2}$ and
generalized connection structure 
$\A = \{\ECS, \aa_1, \aa_2,\beta = \aa_1 \vee \aa_2\}$. 
Suppose the event map is given by
\[
\Ee(x_1,x_2) = \begin{cases}
\aa_1,& \text{if } x_1 < 0, x_2 =0\\
\aa_2, & \text{if } x_1 = 0, x_2 >0\\ 
\beta, & \text{if } x_1 = x_2 = 0,\\ 
\ECS, & \text{otherwise}
\end{cases}
\]
In this case, $\A = \A^1\vee\A^2$, where $\A^i = \{\ECS,\aa_i\}$, $ i \in \is{2}$,
and the network graph is disconnected. However, 
there is no way to write $\Ee(x_1, x_2)$ as $\Ee^1(x_1)\vee\Ee^2(x_2)$ as  
the event sets involving $x_1\in M_1$ depend nontrivially on~$x_2\in M_2$.
Hence the network cannot be decomposable or 
even equivalent to a decomposable network whatever choice we make for admissible vector fields.

Suppose instead we define the event map by
\[\tilde\Ee(x_1,x_2) = \begin{cases}
\aa_1, & \text{if } x_1 = 0, x_2\neq 0\\
\aa_2, & \text{if } x_2 = 0, x_1\neq 0\\ 
\beta,&  \text{if } x_1 = x_2 = 0\\
\ECS, & \text{otherwise}
\end{cases}
\]
In this case  $\A = \A^1\vee\A^2$ and we may write $\Ee = \Ee^1 \vee \Ee^2$ where
$\Ee^i(0) = \aa_i$, and $\Ee^i(x_i) = \ECS$, $x_i\ne 0$, $i \in \is{2}$. 
Suppose that $\f^{\aa_1}(x_1,x_2) = (0,v_2)$, $\f^{\aa_2}(x_1,x_2) = (v_1,0)$, $\f^\ECS(x_1,x_2) = (v_1,v_2)$, where $v_1,v_2 \ne 0$. For the
moment leave $\f^\beta$ unspecified.
Define $\F^i=\{\f_i^\ECS, \f_i^{\alpha_i}\}$, where $\f_i^\ECS(x_i) = v_i$, $\f_i^{\aa_i}(x_i) = 0$, $i \in \is{2}$. Observe that
$\f^\ECS = \f_1^\ECS \times \f_2^\ECS$, $\f^{\aa_1} = \f_1^{\aa_1} \times \f_2^\ECS$ and $\f^{\aa_2} = \f_1^\ECS\times \f_2^{\aa_2}$.
For $(\Nn,\A,\F,\Ee)$ to be a product we additionally
require $\f^\beta(x_1,x_2) = (\f_1^{\aa_1}(x_1),\f_2^{\aa_2}(x_2)) = (0,0)$, all $(x_1,x_2) \in \real^2$.
In particular, if $\f^\beta(0,0) \ne (0,0)$, the network $(\Nn,\A,\F,\Ee)$ is not even equivalent to a product network.
However, if $\f^\beta(0,0) = (0,0)$, then the network $(\Nn,\A,\F,\Ee)$ will be equivalent to a product network
if we redefine $\f^\beta$ to be $\f_1^{\aa_1} \times \f_2^{\aa_2}$ (this does not change the values of $\f^\beta$ on $E^\beta$).
\examend
\end{exam}

\subsection{Sufficient conditions for decomposability}
\label{sec:Decomposability}

Let~$\Net$ be an asynchronous network with~$k$ nodes and $C$ be 
a proper connected component of the connection 
graph~$\Gamma_\Net$. Identify $C$ with
the nonempty subset of~$\is{k}$ corresponding to the labels of the
nodes in the component $C$. Let~$\ol{C} = \is{k}\sm C$.
Since~$C$ is a connected component of $\Gamma_\Net$, 
we can write each $\aa\in \A$ uniquely as 
$\aa=\aa_C\vee\aa_{\ol{C}}$, where $\aa_C$, 
$\aa_{\ol{C}}$ are connection structures on $\Nn^C$ and 
$\Nn^{\ol{C}}$ respectively. Set 
$\A^C = \{\aa_C \dd \aa \in\A\}$. We have a well defined 
projection $\pi_C:\A\arr \A^C$ defined by $\pi_{C}(\aa)=\aa_C$.

Define the event map $\mathcal{E}^C: \Mb_C\times\Mb_{\ol{C}}\to\A^C$ 
by
\[
\mathcal{E}^C(\XX_C, \XX_{\overline{C}}) = \pi_C(\mathcal{E}(\XX_C, \XX_{\overline{C}})).
\]

\begin{Def}
An asynchronous network $\Net$ 
is \emph{structurally decomposable} if for 
any connected component~$C$ of the connection graph $\Gamma_\Net$, 
the map $\Ee^C$ is independent of 
$\XX_{\ol{C}}\in\Mb_{\ol{C}}$
(that is, $\Ee^C(\XX_C, \XX_{\ol{C}}) = \Ee^1(\XX_C)$ where $\Ee^1: \Mb_C \to \A^C$).
\end{Def}

\begin{rem}
Structural decomposability implies conditions on 
structural dependencies that will generally be different from
the dependencies of the network vector field. For example, suppose a component $C$ of the connection graph contains the node $N_1$. If the node $N_1$ is stopped there may 
be a condition that $N_1$
will restart when the state of another node, say $N_2$, attains a certain
value. Necessarily, $N_2$ must lie in $C$ (structural decomposability). However, there need be no connection between $N_1$ and $N_2$ unless $C$ contains exactly two nodes. 
\end{rem}
Suppose that $\Net$ is structurally decomposable and that $\Gamma_\Net$ has connected components $C_1,\dotsc,C_q$.
Set $\Mb_\ell = \Mb_{C_\ell}$, $\A^\ell = \pi_{C_\ell}(\A)$, $\ell \in \is{q}$.
By structural decomposability we may write $\Ee(\XX) = \bigvee_{\ell\in \is{q}} \Ee^\ell(\XX_\ell)$ where
$\Ee^\ell:\Mb_\ell \arr \A^\ell$. For $\alpha \in \A$, $\ell \in \is{q}$, set $\alpha_\ell = \pi_{C_\ell}(\alpha) \in \A^\ell$ and $E^\ell_{\alpha_\ell} = (\Ee^\ell)^{-1}(\alpha_\ell)\subset \Mb_\ell$.

\begin{lemma}
(Notation as above.)
If $\Net$ is structurally decomposable and $\Gamma_\Net$ has connected components $C_1,\dotsc,C_q$, then 
\[
E_\alpha = \prod_{\ell \in \is{q}} E^\ell_{\aa_\ell} \subset \prod_{\ell \in \is{q}} \Mb_\ell,\;\text{for all } \aa\in\A.
\]
\end{lemma}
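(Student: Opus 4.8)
The plan is to show that membership in the event set $E_\alpha = \Ee^{-1}(\alpha)$ factors coordinate-wise across the connected components $C_1,\dotsc,C_q$, by exploiting the uniqueness of the join decomposition along these components. First I would unwind the definitions: since $\Net$ is structurally decomposable, the event map factors as $\Ee(\XX)=\bigvee_{\ell\in\is{q}}\Ee^\ell(\XX_\ell)$ with $\Ee^\ell:\Mb_\ell\arr\A^\ell$ and $\A^\ell=\pi_{C_\ell}(\A)$. Thus a point $\XX=(\XX_1,\dotsc,\XX_q)$ lies in $E_\alpha$ precisely when $\bigvee_\ell\Ee^\ell(\XX_\ell)=\alpha$, and the whole statement reduces to showing that this single equation in $\A$ is equivalent to the system $\Ee^\ell(\XX_\ell)=\alpha_\ell$, $\ell\in\is{q}$.

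The crucial observation is that each summand $\Ee^\ell(\XX_\ell)\in\A^\ell$ is a connection structure supported on $\Nn^{C_\ell}$: under the embedding $\CC{k_{C_\ell}}\hookrightarrow\CC{k}$, all its nonzero adjacency entries $\aa_{ij}$ have $i,j\in C_\ell$ and all its nonzero constraint entries $\aa_{i0}$ have $i\in C_\ell$. Because $C_1,\dotsc,C_q$ partition $\is{k}$, these supports are pairwise disjoint both in the adjacency block and in the constraint column. Consequently the entry-wise maximum defining $\vee$ never produces a genuine collision, since for each index the value is contributed by at most one component. This yields the key step, namely that $\pi_{C_\ell}$ inverts the join, $\pi_{C_\ell}\left(\bigvee_m\Ee^m(\XX_m)\right)=\Ee^\ell(\XX_\ell)$, and likewise $\pi_{C_\ell}(\alpha)=\alpha_\ell$ by the definition of $\alpha_\ell$.

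Applying $\pi_{C_\ell}$ to both sides of $\bigvee_m\Ee^m(\XX_m)=\alpha$ then shows $\Ee^\ell(\XX_\ell)=\alpha_\ell$ for every $\ell$; conversely, if $\Ee^\ell(\XX_\ell)=\alpha_\ell$ for all $\ell$, taking the join recovers $\bigvee_\ell\alpha_\ell=\alpha$. Hence $\XX\in E_\alpha$ if and only if $\XX_\ell\in(\Ee^\ell)^{-1}(\alpha_\ell)=E^\ell_{\alpha_\ell}$ for every $\ell$, which is exactly $\XX\in\prod_{\ell\in\is{q}}E^\ell_{\alpha_\ell}$, giving the claimed equality of sets.

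I expect the only real subtlety --- the main obstacle --- to be the constraint column rather than the adjacency block: since the entries $\aa_{i0}$ range over $\is{p}_i^\bullet$ rather than $\sset{0,1}$, one must confirm that $\vee$ (max-plus) still behaves like a direct sum there and does not conflate distinct constraint labels. The disjointness of supports across the $C_\ell$ is precisely what rules out any conflicting nonzero contribution to a given $\aa_{i0}$, so the decomposition stays unique and the argument goes through with no further hypotheses.
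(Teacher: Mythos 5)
Your argument is correct and is exactly the route the paper takes: the paper dismisses this as ``an immediate consequence of structural decomposability,'' and your write-up simply supplies the details — the factorization $\Ee(\XX)=\bigvee_\ell\Ee^\ell(\XX_\ell)$, the disjointness of supports across components, and the fact that $\pi_{C_\ell}$ inverts the join. Your closing observation about the constraint column is a reasonable sanity check, and the disjointness of the $C_\ell$ does dispose of it as you say.
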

\proof An immediate consequence of structural decomposability. \qed

If $C$ be a proper connected component of the connection graph $\Gamma_\Net$ of an asynchronous network $\Net$, then
by admissibility 
\[
\f^\aa = \f_C^\aa\times \f_{\ol{C}}^\aa, \; \text{for all } \alpha \in \A,
\] where 
$\f^\aa_C: \Mb_C\to T\Mb_C$ and 
$\f^\aa_{\ol{C}}: \Mb_{\ol{C}}\to T\Mb_{\ol{C}}$.

In order that $\Net$ 
be decomposable, this decomposition has to be compatible 
with the projections 
$\pi_C: \A\to \A^C$, $\pi_{\ol{C}}: \A\to \A^{\ol{C}}$. In particular, if
connections in the set of nodes that are in~$\ol{C}$ are added or deleted,
dynamics on $\Mb_C$ is not affected. 

\begin{Def}
(Notation as above.)
The asynchronous network $\Net$ is \emph{dynamically decomposable} if 
for any connected component~$C$ of $\Gamma_\Net$, we have
\[
\f_C^\aa = \f_C^\beta
\]
for all $\alpha,\beta \in \A$ such that $\pi_C(\alpha) = \pi_C(\beta)$.
\end{Def}

\begin{lemma}
(Notation as above.)
Input consistent asynchronous networks are dynamically decomposable.  In particular, 
asynchronous networks with additive input structure are dynamically decomposable.
\end{lemma}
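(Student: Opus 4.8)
The plan is to read off dynamical decomposability directly from the definition of input consistency, with the connectedness of the component doing the essential work; the second assertion then follows at once from the already-established fact that additive input structure implies input consistency.

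First I would fix a connected component $C$ of the connection graph $\Gamma_\Net = \bigvee_{\gamma\in\A}\gamma^\flat$ and take $\aa,\beta\in\A$ with $\pi_C(\aa) = \pi_C(\beta)$. Recalling the factorization $\f^\aa = \f_C^\aa\times\f_{\ol{C}}^\aa$ noted just above the definition, the target identity $\f_C^\aa = \f_C^\beta$ is equivalent to $f_i^\aa = f_i^\beta$ for every $i\in C$, so it suffices to treat the components indexed by $C$.

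The key step is to observe that for $i\in C$ the dependency set $J(i,\aa) = \set{j\in\iz{k}}{\aa_{ij} = 1}$ is contained in $\bu{C}$. Indeed, since $C$ is a connected component of $\Gamma_\Net$ there are no edges in either direction between $C$ and $\ol{C}$; as each $\gamma^\flat$ is a subgraph of $\Gamma_\Net$, this forces $\aa_{ij} = 0$ for all $\aa\in\A$, $i\in C$, $j\in\ol{C}$. Hence the only possibly nonzero entries of the $i$-th row of $\aa$ (with $i\in C$) are those indexed by $j\in\bu{C}$, and these are precisely the entries recorded by $\aa_C = \pi_C(\aa)$ — the constraint entry $\aa_{i0}$ being included since $\Nn^C$ contains $\NS$. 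Consequently $\pi_C(\aa) = \pi_C(\beta)$ forces $\aa_{ij} = \beta_{ij}$ for all $i\in C$, $j\in\bu{C}$, whence $J(i,\aa) = J(i,\beta)$ for every $i\in C$. Input consistency then gives $f_i^\aa = f_i^\beta$ for each $i\in C$, that is $\f_C^\aa = \f_C^\beta$, which is exactly dynamical decomposability.

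I do not anticipate a genuine obstacle: the argument only unwinds definitions, and the single point needing care is that $J(i,\aa)$ depends on $\aa$ only through $\pi_C(\aa)$ when $i\in C$. This is where connectedness of $C$ enters, ruling out any dependence of $f_i$ on the states of nodes outside $C$ and thereby matching the hypothesis $J(i,\aa)=J(i,\beta)$ of input consistency. Finally, the ``in particular'' clause is immediate, since asynchronous networks with additive input structure were shown to be input consistent in section~\ref{sec:AddStruct}, and hence are dynamically decomposable by the first part.
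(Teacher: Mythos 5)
Your argument is correct and follows essentially the same route as the paper's proof: both reduce the claim to the observation that for $i$ in a connected component $C$ of $\Gamma_\Net$ the dependency set $J(i,\aa)$ is determined by $\pi_C(\aa)$ alone (since no $\gamma^\flat$ can have an edge crossing between $C$ and $\ol{C}$), and then invoke input consistency componentwise. The handling of the constraint entry $\aa_{i0}$ and the deduction of the additive-input case from the earlier lemma also match the paper.
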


\begin{proof}
Given $i \in \is{k}, \alpha \in \A$, let $J(i, \aa)$ be the associated dependency set for node~$N_i$.
If $\alpha,\beta \in \A$ and $J(i, \aa) = J(i, \beta)$, then $f_i^\aa = f_i^\beta$ by
input consistency.
If $i\in C$, where~$C$ is a connected component of the network 
graph~$\Gamma_\Net$, then $J(i, \aa)\cap\is{k}\subset C$ for 
all~$\aa\in \A$. Hence 
$J(i, \aa)=J(i, \aa_C\vee \aa_{\ol{C}})$ is independent of $\aa_{\ol{C}}$.
Input consistency implies that 
$f_i^{\aa_C\vee\beta} = f_i^{\aa_C\vee\gamma}$ for all
$\beta, \gamma\in\A_{\ol{C}}$ which yields dynamical decomposability.
\end{proof}

We now state the main result of this section.

\begin{thm}\label{lem:SufDecomp}
Let $\Net$ be a structurally and dynamically 
decomposable asynchronous network with connection graph~$\Gamma$. 
If~$\Gamma$ has connected components $C_1, \dotsc, C_q$ then there 
exist indecomposable asynchronous networks
$\Net^1, \dotsc, \Net^q$ such that
\[\Net = \Net^1\times\dotsb\times\Net^q.\]
\end{thm}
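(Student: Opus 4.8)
The plan is to take the component decomposition literally: I build one asynchronous network $\Net^\ell$ on each connected component $C_\ell$ of $\Gamma$ and then verify that the product of these, as prescribed by Definition~\ref{prodasy}, returns $\Net$. For $\ell \in \is{q}$, set the node set to $\Nn^{C_\ell} = \{N_j \dd j \in \bu{C_\ell}\}$ with phase space $\Mb_\ell = \Mb_{C_\ell}$, take the constraint structure $\is{C}^\ell$ to be the restriction of $\is{C}$ to the nodes of $C_\ell$, put $\A^\ell = \pi_{C_\ell}(\A)$, and let $\Ee^\ell:\Mb_\ell\arr\A^\ell$ be the map supplied by structural decomposability. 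The admissible vector fields are defined by $\f_\ell^{\aa_\ell} \defo \f_{C_\ell}^\aa$ for any $\aa\in\A$ with $\pi_{C_\ell}(\aa)=\aa_\ell$; this is well defined precisely because $\Net$ is dynamically decomposable, so $\f_{C_\ell}^\aa$ depends only on $\pi_{C_\ell}(\aa)$. Before anything else I would check that each $\Net^\ell=(\Nn^{C_\ell},\A^\ell,\F^\ell,\Ee^\ell)$ really is an asynchronous network: $\aa_\ell$-admissibility of $\f_\ell^{\aa_\ell}$ is inherited from the $\aa$-admissibility of $\f^\aa$ restricted to $C_\ell$ (both the dependency condition and tangency to the relevant foliations are local to nodes of $C_\ell$), and constraint regularity of $\Ee^\ell$ follows from that of $\Ee$ once one observes, using the factorization of event sets in the lemma preceding the theorem, that the component event sets $E_i^m$ computed inside $\Mb_\ell$ coincide with those computed inside $\Mb$.

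The core of the argument is to match the four data of $\Net$ against the product $\Net^1\times\dotsb\times\Net^q$. The node sets and constraint structures agree by construction, since $C_1,\dotsc,C_q$ partition $\is{k}$ and constraints are node-local. The event map identity $\Ee=\bigvee_\ell \Ee^\ell$ is exactly structural decomposability. For the vector fields, I would use that $\af \le \Gamma$ for every $\aa\in\A$, so that no admissible vector field couples nodes lying in different components; iterating the admissibility splitting $\f^\aa=\f_C^\aa\times\f_{\ol{C}}^\aa$ over the components then gives $\f^\aa=\prod_\ell \f_{C_\ell}^\aa=\prod_\ell \f_\ell^{\aa_{C_\ell}}$. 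Combined with the generalized connection structure identity $\A=\A^1\vee\dotsb\vee\A^q$ established next, which lets the tuple $(\aa_{C_1},\dotsc,\aa_{C_q})$ range over all of $\A^1\times\dotsb\times\A^q$ as $\aa$ ranges over $\A$, this yields $\F=\F^1\times\dotsb\times\F^q$.

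The step I expect to be the main obstacle is the identity $\A=\A^1\vee\dotsb\vee\A^q$. The inclusion $\A\subseteq\bigvee_\ell \A^\ell$ is easy: since the $C_\ell$ are the connected components of $\Gamma$, each $\aa\in\A$ splits uniquely as $\aa=\bigvee_\ell \aa_{C_\ell}$ with $\aa_{C_\ell}\in\A^\ell$. The reverse inclusion is the subtle one, because \emph{a priori} an arbitrary mix-and-match join $\bigvee_\ell \aa^{(\ell)}$ of components drawn from different elements of $\A$ need not lie in $\A$. To get it I would exploit surjectivity of the event map, which is built into the definition of an asynchronous network: because $\Mb=\prod_\ell \Mb_\ell$ and $\Ee=\bigvee_\ell \Ee^\ell$ with each $\Ee^\ell$ surjective onto $\A^\ell$ (indeed $\A^\ell=\pi_{C_\ell}(\Ee(\Mb))=\Ee^\ell(\Mb_\ell)$ by structural decomposability), letting each $\XX_\ell$ range freely over $\Mb_\ell$ realizes every join $\bigvee_\ell \aa^{(\ell)}$ as $\Ee$ of some point of $\Mb$, so $\bigvee_\ell \A^\ell\subseteq\Ee(\Mb)=\A$. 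This is where surjectivity does the essential work and where the hypotheses are genuinely used.

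Finally I would establish indecomposability of each $\Net^\ell$. Its connection graph is $\Gamma_{\Net^\ell}=\bigvee_{\aa\in\A}(\pi_{C_\ell}(\aa))^\flat$, which is the restriction of $\Gamma$ to the node set $C_\ell$; since $C_\ell$ is a connected component of $\Gamma$, this restriction has a single connected component, and the case of a one-node component is covered since a single-vertex graph is connected. By the contrapositive of Lemma~\ref{lem:NecDecomp}, a network whose connection graph is connected cannot be decomposable, so each $\Net^\ell$ is indecomposable. Assembling the four matched data through Definition~\ref{prodasy}, and noting that the $q$-fold product is unambiguous because $\vee$ and the product of structures are associative over disjoint node sets, yields $\Net=\Net^1\times\dotsb\times\Net^q$ with each factor indecomposable.
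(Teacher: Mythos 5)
Your proof is correct and follows essentially the same route as the paper's: build $\Net^\ell$ on each connected component by projecting $\A$, using structural decomposability for the event maps and dynamical decomposability to make the component vector fields well defined, then reassemble via Definition~\ref{prodasy}. You in fact supply two details the paper's own proof leaves implicit --- the identity $\A=\bigvee_{\ell}\A^\ell$ (which you correctly derive from surjectivity of the event map together with structural decomposability) and the indecomposability of each factor (via the contrapositive of Lemma~\ref{lem:NecDecomp}) --- and both are handled correctly.
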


\begin{proof}
For $\ell \in \is{q}$, define $\A^\ell = \{\alpha_\ell \defoo \pi_\ell(\aa)\dd\aa\in\A\}$
and $\F^\ell = \{\f_\ell^{\aa_\ell} \defoo \f_{C_\ell}^\alpha:\Mb_\ell \arr T\Mb_\ell \dd \aa \in \A\}$. By dynamical indecomposability
we have $\f^\alpha = \prod_{\ell \in \is{q}} \f_\ell^{\aa_\ell}$, for all $\alpha \in \A$.
Constraint structures are
defined for individual nodes and so factorise naturally.  Let $\Ee^\ell:\Mb_\ell\arr \A^\ell$ be the event maps given by structural indecomposability.
If we let $\Net^\ell$ be the asynchronous network $(\Nn^\ell,\A^\ell,\F^\ell,\Ee^\ell)$, where $\Nn^\ell = \{\NS\}\cup \{N_i \dd i \in C_i\}$, $\ell\in\is{q}$, then
$\Net=\prod_{\in\is{q}} \Net^i$.
\end{proof}

Our concluding result on decomposability is an immediate consequence of lemma~\ref{lem:NecDecomp} and theorem~\ref{lem:SufDecomp}. 

\begin{cor}\label{prop:DecompConnectionGraph}
A structurally and dynamically decomposable asynchronous 
network~$\Net$ is decomposable if and only if its connection 
graph has more than one nontrivial connected component.
\end{cor}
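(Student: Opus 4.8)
The plan is to derive the biconditional directly from the two preceding results, retaining throughout the standing hypothesis that $\Net$ is structurally and dynamically decomposable. The statement splits into a necessity direction, which I would read off from Lemma~\ref{lem:NecDecomp}, and a sufficiency direction, which I would obtain from Theorem~\ref{lem:SufDecomp}.

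For necessity, I would argue as follows. Assume $\Net$ is decomposable. Then Lemma~\ref{lem:NecDecomp} applies without any further hypothesis and yields that the connection graph $\Gamma_\Net$ has at least two connected components, which is exactly the assertion that $\Gamma_\Net$ has more than one nontrivial connected component. This direction is therefore immediate.

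For sufficiency, I would start from the assumption that $\Gamma_\Net$ has more than one connected component, say $C_1,\dots,C_q$ with $q\ge 2$. Since $\Net$ is structurally and dynamically decomposable by hypothesis, the conclusion of Theorem~\ref{lem:SufDecomp} is available and produces indecomposable networks $\Net^1,\dots,\Net^q$ with $\Net=\Net^1\times\dots\times\Net^q$. Because $q\ge 2$, this displays $\Net$ as a genuine product of asynchronous networks, so $\Net$ is decomposable in the sense of Definition~\ref{indec}. The one point I would pause on is to confirm that each factor $\Net^\ell$ is a bona fide asynchronous network with nonempty node set; this holds because each $C_\ell$ is a connected component of the graph on the nodes $N_1,\dots,N_k$ and hence contains at least one node, so the partition into $C$ and $\ol{C}$ underlying the product is into proper nonempty subsets.

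I do not expect any real obstacle, since both implications are formal consequences of statements already established. The only care required is terminological bookkeeping: matching the phrase ``more than one nontrivial connected component'' in the statement with ``at least two connected components'' in Lemma~\ref{lem:NecDecomp} and with the index range $q\ge 2$ in Theorem~\ref{lem:SufDecomp}, and checking that a factorization into $q\ge 2$ factors genuinely satisfies the definition of decomposability rather than reducing to the trivial self-product $\Net=\Net$ that arises when $q=1$.
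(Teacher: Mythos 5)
Your proposal is correct and follows exactly the route the paper takes: the paper states the corollary as an immediate consequence of Lemma~\ref{lem:NecDecomp} (necessity) and Theorem~\ref{lem:SufDecomp} (sufficiency), which is precisely your two-direction argument. Your extra care about the case $q\ge 2$ versus the trivial factorization, and about matching ``at least two connected components'' with ``more than one nontrivial connected component,'' is sound bookkeeping that the paper leaves implicit.
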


\subsection{Factorization of asynchronous networks}

Assume for this section that $\Net = (\Nn,\A,\F,\Ee)$ is an asynchronous network which is not necessarily
structurally or dynamically indecomposable.
\begin{Def}
The asynchronous network $\Net^1$ is a factor of $\Net$ if there is an asynchronous network
$\Net^2$ such that $\Net = \Net^1 \times \Net^2$. 
\end{Def}
The proof of the next lemma is immediate from the definition of a product.
\begin{lemma}\label{neccond}
If $\Net^1$ is a factor of $\Net$, then
the connection graph $\Gamma_{\Net^1}$ is a union of connected components of $\Gamma_{\Net}$.
\end{lemma}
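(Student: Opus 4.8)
The plan is to unwind the definition of a product and simply read off the block structure of the connection graph. Suppose $\Net^1$ is a factor of $\Net$, so that by definition there is an asynchronous network $\Net^2$ with $\Net = \Net^1\times\Net^2$. Appealing to Definition~\ref{prodasy}, I may write $\Net^1 = (\Nn^A,\A^A,\F^A,\Ee^A)$ and $\Net^2=(\Nn^B,\A^B,\F^B,\Ee^B)$, where $A$ and $B = \is{k}\sm A$ are complementary subsets of $\is{k}$ and $\A = \A^A\vee\A^B$, the sets $\A^A\subset\CC{k_A}$ and $\A^B\subset\CC{k_B}$ being regarded as embedded in $\CC{k}$ via the relabelling embeddings. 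The goal is to show that the $A$-supported graph $\Gamma_{\Net^1}=\bigvee_{\aa\in\A^A}\CSf{\aa}$ is a union of connected components of $\Gamma_\Net=\bigvee_{\gamma\in\A}\CSf{\gamma}$.

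First I would observe that the projection $\pi\colon\CC{k}\arr M(k)$, $\gamma\mapsto\CSf{\gamma}$, which simply deletes the constraining-node column, commutes with the join, since both operations act componentwise; hence $(\ah\vee\bh)^\flat = \CSf{\ah}\vee\CSf{\bh}$ for every $\aa\in\A^A$, $\beta\in\A^B$. Using that the join is the pointwise maximum --- so associative, commutative, and idempotent --- together with the nonemptiness of $\A^A$ and $\A^B$, I can distribute the big join over the product $\A=\A^A\vee\A^B$ to obtain
\[
\Gamma_\Net=\bigvee_{\gamma\in\A}\CSf{\gamma}=\Bigl(\bigvee_{\aa\in\A^A}\CSf{\aa}\Bigr)\vee\Bigl(\bigvee_{\beta\in\A^B}\CSf{\beta}\Bigr)=\Gamma_{\Net^1}\vee\Gamma_{\Net^2}.
\]
By the relabelling embeddings, $\Gamma_{\Net^1}$ has support contained in $A\times A$ and $\Gamma_{\Net^2}$ has support contained in $B\times B$; since $A\cap B=\emptyset$, the matrix $\Gamma_\Net$ is therefore block diagonal with respect to the partition $\is{k}=A\sqcup B$.

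From here the conclusion is immediate. Block diagonality means $\Gamma_\Net$ has no edge with one endpoint in $A$ and the other in $B$, so every connected component of $\Gamma_\Net$ lies entirely within $A$ or entirely within $B$; in particular the node set $A$ is exactly the union of those components of $\Gamma_\Net$ contained in $A$. Because $\Gamma_{\Net^2}$ contributes no edges inside $A$, the induced subgraph of $\Gamma_\Net$ on $A$ coincides with $\Gamma_{\Net^1}$, so $\Gamma_{\Net^1}$ is precisely that union of components and the claim follows. The content here is light --- the lemma is essentially bookkeeping --- and the only points requiring care are tracking the embeddings $\CC{k_A},\CC{k_B}\hookrightarrow\CC{k}$ so that the block-diagonal structure is genuinely visible, and remembering to count isolated nodes of $A$ as singleton connected components so that the phrase \emph{union of connected components} covers them as well.
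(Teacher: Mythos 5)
Your proof is correct and is exactly the argument the paper has in mind: the paper dismisses this lemma with ``immediate from the definition of a product,'' and your block-diagonal decomposition $\Gamma_\Net=\Gamma_{\Net^1}\vee\Gamma_{\Net^2}$ with disjoint supports $A$ and $B$ is precisely the bookkeeping that makes that remark rigorous. Your care with the embeddings and with isolated nodes as singleton components is appropriate but does not change the substance.
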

\begin{rem}
If $\Net^1$ is indecomposable, the connection graph $\Gamma_{\Net^1}$ may have more than one component -- unless
$\Net$ is structurally and dynamically indecomposable (theorem~\ref{lem:SufDecomp}).
\end{rem}

\begin{prop}
\label{uniqfactor}
Every asynchronous network $\Net$ has a factorization $\prod_{a\in\is{q}}\Net^a$ as a product of indecomposable
asynchronous networks. The factorization is unique, up to the order of factors.
\end{prop}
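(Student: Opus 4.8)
The plan is to prove existence by induction on the number of nodes and uniqueness by a lattice-theoretic analysis of the subsets of $\is{k}$ over which $\Net$ factors.

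For \emph{existence} I would induct on $k=|\is{k}|$. If $\Net$ is indecomposable there is nothing to prove; otherwise, by definition~\ref{indec} we may write $\Net=\Net^A\times\Net^{\overline{A}}$ for complementary nonempty proper subsets $A,\overline{A}=\is{k}\sm A$, and each factor has strictly fewer than $k$ nodes. The inductive hypothesis factors each into indecomposables. Since the product of definition~\ref{prodasy} is built entirely from the join $\vee$ on connection structures together with Cartesian products of phase spaces and vector fields, it is associative and commutative up to relabelling; concatenating the two factorizations gives one for $\Net$. The base case $k=1$ is immediate, as a single nontrivial node cannot be split.

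For \emph{uniqueness} I would call $A\subseteq\is{k}$ a \emph{factor set} if $\Net=\Net^A\times\Net^{\overline{A}}$ for suitable factors, and adopt the convention that $\ECS$ and $\is{k}$ are factor sets; clearly the complement of a factor set is a factor set. Unwinding definition~\ref{prodasy}, $A$ is a factor set exactly when (i) $A$ is a union of connected components of $\Gamma_\Net$ (lemma~\ref{neccond}), (ii) the event map splits, i.e. $\pi_A\circ\Ee$ depends only on $\XX_A$ and $\pi_{\overline{A}}\circ\Ee$ only on $\XX_{\overline{A}}$, and (iii) each $\f^\gamma$ splits compatibly, its $A$-component depending only on $\XX_A$ and on $\pi_A(\gamma)$. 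The pivotal step, and the one I expect to be the main obstacle, is the \textbf{Key Lemma}: factor sets are closed under intersection. Given factor sets $A,A'$, condition (i) for $A\cap A'$ is immediate because the components of $\Gamma_\Net$ partition $\is{k}$, so an intersection of unions of components is a union of components. For (ii) and (iii) the crucial observation is that a quantity depending only on $\XX_A$ and, simultaneously, only on $\XX_{A'}$ depends only on $\XX_{A\cap A'}$; applying this to $\pi_{A\cap A'}\circ\Ee$ and to the $(A\cap A')$-components of the $\f^\gamma$ gives the splitting over $(A\cap A',\overline{A\cap A'})$ after decomposing the complement into $\overline{A}\cap A'$, $A\cap\overline{A'}$, and $\overline{A}\cap\overline{A'}$. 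The genuinely delicate point is well-definedness of the $(A\cap A')$-component as a function of $\pi_{A\cap A'}(\gamma)$ alone: here I would use that $\A=\A^A\vee\A^{\overline{A}}$ is a \emph{full} join, so given $\gamma,\gamma'$ with $\pi_{A\cap A'}(\gamma)=\pi_{A\cap A'}(\gamma')$ one forms $\gamma''=\pi_A(\gamma)\vee\pi_{\overline{A}}(\gamma')\in\A$, checks $\pi_A(\gamma'')=\pi_A(\gamma)$ and $\pi_{A'}(\gamma'')=\pi_{A'}(\gamma')$, and chains the $A$- and $A'$-dependencies to conclude the components agree.

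With intersection and complement closure, the factor sets form a Boolean subalgebra of the power set of $\is{k}$. For each $i$ the intersection $A_i$ of all factor sets containing $i$ is then the smallest factor set containing $i$, an atom, and the distinct atoms partition $\is{k}$. A short restriction lemma (again using intersection closure, via $B\cap\overline{A}$) shows that if $A\subseteq B$ are factor sets then $A$ is a factor set of $\Net^B$; peeling off atoms one at a time, together with associativity of the product, yields $\Net=\prod_a\Net^{A_a}$ with each $\Net^{A_a}$ indecomposable. Finally, for any factorization $\Net=\prod_b\mathfrak{M}^b$ into indecomposables, each node set $B_b$ is a factor set, hence a union of atoms; if $B_b$ contained two distinct atoms, the restriction lemma would exhibit a proper nonempty factor set of $\mathfrak{M}^b$, contradicting indecomposability. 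Thus each $B_b$ is a single atom, the partition $\{B_b\}$ is the canonical partition into atoms, and the factorization is unique up to the order of the factors.
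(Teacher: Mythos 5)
Your argument is correct, and it is considerably more complete than the paper's own proof, which consists of two sentences: existence is declared obvious and uniqueness is said to ``follow easily'' from lemma~\ref{neccond}. That lemma only gives condition (i) of your characterization of factor sets -- each factor's node set is a union of connected components of $\Gamma_\Net$ -- and by itself this does not exclude two incomparable coarsenings of the component partition, each with indecomposable blocks. The content the paper leaves implicit is precisely your Key Lemma, that factor sets are closed under intersection (hence form a Boolean subalgebra whose atoms give the canonical blocks), and your handling of the one genuinely delicate point -- well-definedness of the $(A\cap A')$-components via the element $\gamma''=\pi_A(\gamma)\vee\pi_{\overline{A}}(\gamma')$ supplied by fullness of the join, chained through the $A$- and $A'$-dependencies -- is sound. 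One small point worth making explicit: the same two-step join construction (first over $A$, then over $A'$) is also what shows $\A=\A^{A\cap A'}\vee\A^{\overline{A\cap A'}}$ is again a \emph{full} join, which the product structure requires; you use fullness but do not quite state that it is inherited by the intersection. In short, the paper buys brevity by appealing to lemma~\ref{neccond}; your route supplies the actual uniqueness argument, at the cost of setting up the lattice of factor sets, and would serve as a complete replacement for the paper's proof.
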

\proof Existence is obvious. The uniqueness of factorization follows easily from lemma~\ref{neccond}. \qed

\vspace*{0.1in}

\noindent {\bf Acknowledgements.}
CB would like to thank Marc Timme at the Max Planck Institute for Dynamics
and Self-Organization for continuing hospitality. MF would like to
thank the Mathematics Department at Rice University, where
much of this work was done, for providing a warm and supportive working environment, and
Steve Furber of Manchester University for his penetrating insights and questions. 
Both authors would like to acknowledge fruitful discussions with 
many people --- too many to name individually here --- whose input has proved 
invaluable in helping to shape these ideas.

\end{document}